\newcommand{\e}{\varepsilon}
\newcommand{\ti}{\tilde}
\newcommand{\al}{\alpha}
\newcommand{\ga}{\gamma}
\newcommand {\sg} {\sigma} 
\renewcommand {\th} {\theta} 
\newcommand {\tx} {\ti x}
\newcommand{\la}{\label}
\newcommand{\re}{\eqref}
\newcommand{\R}{\mathbb{R}}
\newcommand {\f}   {\frac}
\newcommand{\beq}{\begin{equation}}
\newcommand{\eeq}{\end{equation}}
\newtheorem{thm}{Theorem}[section]
\newtheorem{lem}[thm]{Lemma}
\newtheorem{rem}[thm]{Remark}
\newtheorem{cor}[thm]{Corollary} 
 \title[BGP solutions of a Boltzmann mean field game]{Balanced growth path solutions of a Boltzmann mean field game model for knowledge growth} 
\author{Martin Burger}
\address{Institute for Computational and Applied Mathematics, University of M\"unster, Einsteinstrasse 62, 48149 M\"unster, Germany}
\email{martin.burger@wwu.de} 
\author{Alexander Lorz}
\address{Sorbonne Universit\'es, UPMC Univ Paris 06, UMR 7598, Laboratoire Jacques-Louis Lions, F-75005, Paris, France; CNRS, UMR 7598, Laboratoire Jacques-Louis Lions, F-75005, Paris, France; INRIA-Paris-Rocquencourt, EPC MAMBA, Domaine de Voluceau, BP105, 78153 Le Chesnay Cedex, and CSMSE Division
King Abdullah University of Science and Technology (KAUST) Thuwal 23955-6900,
Saudi Arabia.}
\email{alexander.lorz@upmc.fr} 
\author{Marie-Therese Wolfram}
\address{University of Warwick, Coventry CV4 7AL, UK and Radon Institute for Computational and Applied Mathematics, Austrian Academy of Sciences, Altenbergerstr. 69, 4040 Linz, Austria}
\email{m.wolfram@warwick.ac.uk}
\begin{document}

\begin{abstract}
In this paper we study balanced growth path solutions of a Boltzmann mean field game model proposed by Lucas et al \cite{LM2013} to model knowledge growth in an economy.
Agents can either increase their knowledge level by exchanging ideas in learning events or by producing goods with the knowledge they already have. 
The existence of balanced growth path solutions implies exponential growth of the overall production in time. We proof existence of balanced growth path solutions if the initial distribution of individuals with respect to their knowledge level satisfies
a Pareto-tail condition. Furthermore we give first insights into the existence of such solutions if in addition to production and knowledge exchange the
knowledge level evolves by geometric Brownian motion. 
\end{abstract}

\maketitle

\section{Introduction}

\noindent Economic growth measures the inflation-adjusted increase in the market value of goods and services in an economy. A common measure is the gross-domestic product (GDP).
The GDP of the most developed countries has grown about by two percent per year since World War II. This sustained growth supports the idea of so called 
balanced growth path (BGP), which correspond to trajectories along which certain functions grow exponentially in time. Understanding what conditions initiate balanced
growth in the long-run has been an active area of research. Different mathematical models have been proposed to describe substantial growth, which can be roughly grouped into 
exogenous and endogenous growth models. Endogenous growth theory is primary based on the assumption that economic growth is related to ingenuous forces, such as human capital, internal
policies or innovation. In these models investments in ingenuous factors may lead to substantial growth. This is in contrast to exogenous growth models, which are based on the assumption that economic
prosperity is primarily determined by external rather than internal factors.\\

\noindent In this paper we study the existence of BGP solutions for an endogenous growth model proposed by Lucas and Moll, see \cite{LM2013}. It is based on the assumption that
knowledge growth in an economy is promoted by 'imitation' and 'innovation'. Imitation corresponds to learning from others, innovation to developing new ideas through
experimentation. Luttmer \cite{L2012, L2012-2} assumed that agents are 
characterized by their knowledge level and exchange knowledge in meetings. Innovation is incorporated
 via additional Brownian motion. His model serves as a starting point for the model proposed by Lucas et al \cite{LM2013}, who linked the meeting/interaction frequency of agents to an optimal choice. Hence
agents decide how much time they spend on learning and how much on producing goods. Their model corresponds to a Boltzmann mean-field game (BMFG), which we shall detail below.\\
In all these models meetings between agents are described using mathematical tools and methods from statistical mechanics, in particular kinetic theory. Kinetic theory was initially developed by Ludwig Boltzmann to analyze the statistical behavior of a system not in equilibrium \cite{boltzmann}, for example to describe the thermodynamics of dilute gases,  and has led to extensive
research on its mathematical properties (cf. \cite{cercignani1988boltzmann,diperna1989cauchy,villani2002review} and references therein). The Boltzmann equation describes 
the evolution of the probability distribution function due to microscopic interactions, for example collisions of particles or meetings between agents. Analogous approaches  have been proposed for
various applications in socio-economic sciences recently, for example price formation \cite{BCMW2013, DMT2008}, opinion exchange \cite{T2006, BS2009,DMPW2009,PT2014} or non-cooperative games \cite{DLR2014}. For a general overview on interacting multiagent systems and kinetic equations we refer to \cite{PT2013}.\\
\noindent Boltzmann mean field game (BMFG) models were recently introduced in macroeconomics, international trade and finance. While in classical mean field games the evolution
of the agent distribution is described by a Fokker-Planck equation, in BMFG models interactions between agents and their effect on the overall 
dynamics are given by a Boltzmann-type equation. Each agent determines its interaction frequency by mini- or maximizing a given cost or utility functional, resulting
in a coupling to a Hamilton-Jacobi Bellman equation. \\

\noindent Lucas et al \cite{LM2013} consider a continuum
of agents, which are characterized by their knowledge level $z \in \R^+$ and the time they devote to learning $s = s(z,t)$. Each individual has one unit of time, which
he/she can split between producing goods with the knowledge already obtained or meeting other individuals to enhance the knowledge level. Meetings are modeled by collisions
in which an individual compares its knowledge level $z \in \R^+$ with the knowledge level $z' \in \R^+$ of the other and leaves with the larger of the two, that is
\begin{align}
z = \max(z, z').
\end{align}
Let $\alpha = \alpha(s)$ denote the probability of an individual who spends an $s$-th fraction of its time on learning to  meet someone with a higher knowledge level.
Then the distribution of agents $f = f(z,t)$ with respect to their knowledge level is described by
the following Boltzmann-type equation:
\begin{align}\label{e:boltzmann}
\partial_t f(z,t) = -\alpha(s(z,t)) f(z,t) \int_z^{\infty} f(y,t)dy + f(z,t) \int_0^z \alpha(s(y,t)) f(y,t) dy.
\end{align}
The first term on the right hand side describes the loss due to interaction with a higher knowledge level, that is $y > z$. The second term the gain due to
meetings with individuals with a lower knowledge level $y < z$.  We assume that the individual production is determined by the knowledge level and the fraction of time spent on working, i.e.,
\begin{align*}
y(t) = (1-s(z,t)) z.
\end{align*}
Then the total earnings in an economy are given by
\begin{align}\label{e:production}
Y(t)  = \int_{0}^\infty(1-s(z,t)) z f(z,t)~dz,
\end{align}
that is the distribution of individuals with respect to their knowledge level times the individual productivity. Each agent maximizes its future earnings (discounted by a given temporal
discount factor $r \in \mathbb{R}^+$) by choosing the optimal fraction of time $s = s(z,t)$ spend on learning. Then this optimal fraction of learning time, is determined by the solution $s = s(z,t)$ of the optimal control problem
\begin{align*}
  V(x,t') = \max_{s \in \mathcal{S}} \bigl[\int_{t'}^T \int_0^{\infty} {e^{-r(t-t')}} (1-s(z,t)) z  \rho_x(z,t) dzdt \bigr],
\end{align*}
subject to 
\begin{align*}
\partial_t \rho_{x}(z,t) = - \alpha(s) \rho_x(z,t) \int_z^{\infty} f(y,t)\,dy + f(z,t) \int_0^z\alpha(s) \rho_x(y,t)\,dy
\end{align*}
with $\rho_x(z,t') = \delta_x$.
Here $\mathcal{S}$ denotes the set of admissible controls given by
\begin{align*}
\mathcal{S} = \lbrace s: [0,\infty) \times [0,T] \rightarrow [0,1] \rbrace.
\end{align*}

\noindent The optimal strategy can be calculated via the Lagrange functional, see  \cite{BLW2014} for details.
The optimality condition with respect to $f$ corresponds to the Hamilton-Jacobi-Bellman equation for the  value function $V = V(z,t)$:
\begin{align}
\partial_t V(z,t) - r V(z,t) + \max_{s \in \mathcal{S}}\bigl((1-s(z,t))z + \alpha(s) \int_z^{\infty}[V(y,t)-V(z,t)] f(y,t) dy\bigr) = 0.\la{e.V}
\end{align}
\noindent The situation detailed above can be summarized by the following BMFG system:
\begin{subequations}\label{e:bmfg1}
\begin{align}
&\partial_t f(z,t) = -\alpha(S(z,t)) f(z,t) \int_z^{\infty} f(y,t)dy + f(z,t) \int_0^z \alpha(S(y,t)) f(y,t) dy. \label{e:boltz}\\
&\partial_t V(z,t) - r V(z,t) = -\max_{s \in \mathcal{S}}\left[(1-s(z,t))z + \alpha(s(z,t)) \int_z^{\infty}[V(y,t)-V(z,t)] f(y,t) dy \right] \label{e:hjb}\\
&S(z,t) = \arg \max_{s \in \mathcal{S}} \left[(1-s(z,t)) z + \alpha(s(z,t)) \int_z^{\infty}[V(y,t)-V(z,t)] f(y,t) dy\right],\label{e:S0}\\
&f(z,0) = f_0(z),\\
&V(z,T) = 0.
\end{align}
\end{subequations}
For further details on the underlying modeling assumptions we refer to \cite{LM2013} and \cite{BLW2014}.\\

\noindent Local in time existence and uniqueness of solutions to \eqref{e:bmfg1} was shown by Burger et al in \cite{BLW2014}. 
 Lucas et al \cite{LM2013} 
postulated the existence of special solutions to \eqref{e:bmfg1} corresponding to exponential growth of the overall production \eqref{e:production}. 
First analytic results about the existence of BGP solutions in special situations were provided in \cite{BLW2014}. In this work we present a full analysis 
for the existence of BGP solutions and discuss conditions under which those type of solutions exist. \\

\noindent This paper is organized as follows: in Section \ref{s:bgp} we discuss the notion of balanced growth path solutions and state some analytic results from \cite{BLW2014} which
we use in the following. Existence of BGP solutions is shown in Section \ref{s:existencebgp}. We conclude by discussing the existence of BGP solutions in the case
of knowledge diffusion and presenting numerical simulations supporting our claims in Section \ref{s:knowledgediff}.

\section{Balanced Growth Paths}\label{s:bgp}

\noindent We start by introducing the notion of balanced growth path solutions for system \eqref{e:bmfg1}. 
 Assume there exists a constant $\gamma$ with $x = z e^{-\gamma t}$ such that we can define the new functions
\begin{align}\label{e:rescalbgp}
&f(z,t) =  e^{-\ga t}\phi(ze^{-\ga t}), ~~V(z,t)= e^{\ga t}v(ze^{-\ga t}) \text{ and }s(z,t)= \sg(ze^{-\ga t}).
\end{align}
 Then the Boltzmann mean field game \eqref{e:bmfg1} in $(\phi, v, \sigma) = (\phi(x), v(x), \sigma(x))$ becomes  
\begin{subequations}\label{e:bgp2}
\begin{align}
-\ga \phi(x) -\ga x \phi'(x) &= \phi(x)\int_0^x\al(\sg(y))\phi(y)\,dy - \al(\sg(x))\phi(x)\int_x^\infty\phi(y)\,dy\label{e:phi}\\
(r-\ga)v(x)+\ga x v'(x) &=  \max_{s \in \Xi}\left[(1-s)x+\al(s)\int_x^\infty[v(y)-v(x)]\phi(y)\,dy \right] \la{e:v}\\
S(x) &= \arg \max_{s \in \Xi}\left\{(1-s)x+\al(s)\int_x^\infty[v(y)-v(x)]\phi(y)\,dy \right\}\label{e:S}
\end{align}
\end{subequations}
where $\Xi = \lbrace s: \R^+ \to [0,1] \rbrace$ denotes the set of admissible controls. Then the rescaled production function \eqref{e:production} reads 
$$Y(t) =  \int_{0}^{\infty} (1-S(x)) e^{\gamma t} x e^{-\ga t} \phi(x) e^{\ga t} dx  = e^{\ga t} \int_{0}^{\infty}(1-S(x)) x \phi(x) dx,
$$
giving  exponential growth in time.\\
 A necessary prerequisite
 is the assumption that the initial cumulative distribution function of agents with respect to their knowledge level has a Pareto tail:
\begin{enumerate}[label=(A\arabic*), start=1] 
\item \label{a:pareto}  The productivity function $F(z,0) = \int_0^z f_0(y) dy$ has a Pareto tail, i.e. there exist constants $k, \theta \in \R^+$ such that
 \begin{align}
\lim_{z \to \infty} \f{1-F(z,0)}{z^{-1/\theta}} = k. \label{e:paretoF}
\end{align}
Condition \eqref{e:paretoF} in the rescaled variable $\phi$ reads as:
\beq 
\lim_{z\to \infty} \f{1-\int_0^z\phi(y)\,dy}{z^{-1/\theta}}=k. \la{e.pareto}
\eeq
\end{enumerate}
In this case the growth parameter $\gamma$ is determined by
\beq\la{e.ga0}
\ga= \th \int_0^\infty \al(\sigma(y))\phi(y)\,dx,
\eeq
see Lemma 4.2 in \cite{BLW2014}. Hence exponential growth relates to a 'fat-tailed' distribution of the initial agent density. We will see that this is not the only possibility to obtain substantial
growth. Achdou et al. \cite{ABLLM2014} postulate that knowledge diffusion may also lead to balanced growth, an idea which we shall discuss in Section \ref{s:knowledgediff}.
In \cite{BLW2014} we prove existence for the subsystem \re{e:phi}, \re{e.ga0} when the maximizer $S$ is given.

\subsection{Preliminaries}

In the following we state assumptions, notations and theoretical results from \cite{BLW2014}, which we shall use throughout this paper. 
We use capital letters to refer to the cumulative distribution functions of  $f$ and $\phi$, defined by
\begin{align*}
  F(z,t) = \int_0^z f(y,t) dy \quad \text{ and } \quad \Phi(x) = \int_0^x \phi(y) dy.
\end{align*}
To simplify the notation in the following we use the additional function
\begin{align*}
  B(x) := \int_x^{\infty} (v(y)-v(x)) \phi(y) dy.
\end{align*}
\noindent Since $\alpha = \alpha(s)$ corresponds to the interaction probability to engage in a meeting (which initiates learning) we shall also refer to it as the term learning function 
throughout this paper. Furthermore  we need the following assumptions:
\begin{enumerate}[label=(A\arabic*), start=2]
\item \label{a:f0} Let $f_0 \in L^{\infty}(\R^+)$ be a probability density with $\int_{0}^\infty f_0(y)dy = 1$ and $f_0(z) \geq 0$ for all $z$.
\item \label{a:alpha} Let $\alpha: [0,1]  \to \R^+$, $\alpha \in C^{\infty}([0,1]),~\alpha(0) = 0,~\alpha'(0) = \infty$, $\alpha''<0$ and $\alpha$ monotone.
\end{enumerate}

\noindent In case of a given function $\alpha$ we were able to prove the existence of a  solution $\phi$ which has a Pareto tail. We recall this result from \cite{BLW2014}:
\begin{thm}
Let assumption \ref{a:alpha} hold and $\sigma\in C^1([0,\infty))$ denote a given function, which satisfies
\begin{align*}
\sigma(z) = 1 \text{ for } z \in [0, z_0],~\sigma'(z) \leq 0.
\end{align*}
Then there exists a $\gamma \in \R^+$ and a  solution $\phi \in L^1([0,\infty))$ to equation \eqref{e:phi}, which has a Pareto tail.
\end{thm}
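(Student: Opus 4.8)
The plan is to reformulate the stationary Boltzmann equation \re{e:phi} in terms of the cumulative distribution function $\Phi(x)=\int_0^x\phi(y)\,dy$ and the auxiliary quantity $A(x):=\int_0^x\al(\sg(y))\phi(y)\,dy$, so that the integro-differential equation collapses to an ordinary differential equation. Indeed, writing $\int_x^\infty\phi(y)\,dy=1-\Phi(x)$ (after normalising the mass to one), equation \re{e:phi} becomes $-\ga(\Phi(x))' \,x \cdot(\text{something})$; more precisely, multiplying through and integrating, one finds that $\Phi$ solves a first-order ODE of logistic type whose coefficients involve $A$, and that $A'=\al(\sg)\,\Phi'$. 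The key structural fact, already used in \cite{BLW2014} and recorded in \re{e.ga0}, is that consistency of the tail forces $\ga=\th\,A(\infty)$; so the first step is to fix $\ga$ by this relation and treat $A(\infty)$ (equivalently the total mass, which we normalise) as the free constant. First I would therefore set up the ODE system for $(\Phi,A)$ on $[z_0,\infty)$ with $\Phi(z_0)$ determined by the part of the solution on $[0,z_0]$ where $\sg\equiv1$ and $\al(\sg)\equiv\al(1)$ is constant, which makes \re{e:phi} explicitly solvable there.

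Next I would construct the solution on $[z_0,\infty)$ by a fixed-point or shooting argument. Since $\sg$ is given, $C^1$, nonincreasing, and equal to $1$ near the origin, the coefficient $\al(\sg(x))$ is a known bounded continuous function, so the reduced ODE for $\Phi$ has locally Lipschitz right-hand side away from the singular endpoints and standard Picard–Lindel\"of theory yields a local solution; the monotonicity $\sg'\le0$ together with $\al''<0$, $\al$ monotone (assumption \ref{a:alpha}) gives the sign conditions needed to keep $\Phi$ increasing and bounded above by $1$, hence global existence on $[z_0,\infty)$ and $\phi=\Phi'\in L^1$. The normalisation $\Phi(\infty)=1$ is then enforced by choosing the one remaining free parameter (the constant of integration / the value of $\ga$), i.e. a one-dimensional shooting in $\ga$: for $\ga$ too small the mass overshoots, for $\ga$ too large it undershoots, and continuity/monotonicity of the mass in $\ga$ gives an intermediate value. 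This is where I expect the main obstacle: showing that $\ga\mapsto\Phi_\ga(\infty)$ is continuous and genuinely monotone (or at least that it crosses $1$), because the endpoint behaviour at $x\to\infty$ is exactly the delicate Pareto-tail regime where the ODE degenerates, so one must control the solution uniformly as $x\to\infty$, presumably via a Gr\"onwall estimate or an explicit super/sub-solution of the form $c\,x^{-1/\th}$.

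Finally I would verify the Pareto-tail property \re{e.pareto} for the constructed $\phi$. The natural route is asymptotic analysis of the ODE as $x\to\infty$: since $\sg$ is nonincreasing and bounded below (by $0$), $\al(\sg(x))\to\al(\sg(\infty))=:\al_\infty$ exists, so the reduced equation for $1-\Phi$ becomes, to leading order, $-\ga\,x\,(1-\Phi)' = \al_\infty(1-\Phi)\,(\text{mass}) + \text{l.o.t.}$, i.e. a Cauchy–Euler equation whose solutions behave like $x^{-\al_\infty\cdot(\cdot)/\ga}$; matching the exponent to $-1/\th$ recovers precisely \re{e.ga0} with $\ga=\th\al_\infty\Phi(\infty)=\th\int_0^\infty\al(\sg(y))\phi(y)\,dy$, and a standard comparison/perturbation argument upgrades the leading-order power law to the genuine limit \re{e.pareto} with some constant $k>0$. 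I would then collect these pieces: existence of $\ga\in\R^+$ from the shooting step, $\phi=\Phi'\ge0$ in $L^1([0,\infty))$ from the global ODE solution with $\Phi(\infty)=1$, and the Pareto tail from the asymptotic analysis, which together give the theorem; the only real work beyond bookkeeping is the uniform-in-$x$ control needed for both the shooting monotonicity and the tail asymptotics.
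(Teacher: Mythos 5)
Your reduction of \re{e:phi} to the ODE system $\ga x\Phi'=(1-\Phi)A$, $A'=\al(\sg)\Phi'$ with $A(x)=\int_0^x\al(\sg(y))\phi(y)\,dy$ is the right starting point (it is exactly \re{e.Phi}), and the tail asymptotics in your last paragraph --- $1-\Phi\sim k\,x^{-A(\infty)/\ga}$, so that \re{e.ga0} is forced --- are also correct. But two steps in the middle are genuinely wrong as stated. First, the shooting target is misidentified: for \emph{any} nondegenerate global solution with $0<\Phi<1$ the total mass is automatically $1$, because $A(x)\ge\al(1)\Phi(z_0)>0$ for $x\ge z_0$ yields $1-\Phi(x)\le(1-\Phi(z_0))(x/z_0)^{-\al(1)\Phi(z_0)/\ga}\to0$; so ``mass overshoots/undershoots as $\ga$ varies'' never happens and cannot be used to select $\ga$. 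The one scalar condition that actually must be matched is the consistency relation $\ga=\th A(\infty)$ (equivalently, that the tail exponent equals $1/\th$), and since $A(\infty)$ depends on the whole solution, which in turn depends on $\ga$, you cannot simply ``fix $\ga$ by this relation''; you need a fixed-point or intermediate-value argument for the map $\ga\mapsto\th A(\infty;\ga)$, whose continuity is exactly the uniform-in-$x$ control you defer. Second, the forward problem from $x=0$ does \emph{not} determine $\Phi(z_0)$: on $[0,z_0]$ the equation is $\ga x\Phi'=\al(1)\Phi(1-\Phi)$, whose solutions with $\Phi(0)=0$ form the one-parameter family $\Phi(x)=(1+c\,x^{-\al(1)/\ga})^{-1}$, $c>0$, together with the degenerate $\Phi\equiv1$ (the Dirac-$\delta$ discussed in Section \ref{s:propbgp}); Picard--Lindel\"of fails at the singular point $x=0$. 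For bare existence this non-uniqueness is harmless (pick any $c$), but your write-up treats $\Phi(z_0)$ as determined and then assigns the role of free parameter to $\ga$ for the wrong constraint.

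For comparison: this theorem is quoted from \cite{BLW2014} rather than proved in the present paper, but the closest in-paper argument (Section 3.2) avoids both pitfalls by integrating from the other end --- substituting $\tx=x^{-1/\th}$ and $\Phi=1-\f\ga\th K(\tx)\tx$, prescribing the Pareto constant as an initial datum $K(0)=\ti k$ at $\tx=0$ (i.e.\ at $x=\infty$), solving a well-posed Volterra equation forward in $\tx$, and reading off $\ga=\th/\lim_{\tx\to\infty}\tx K(\tx)$ at the end with no shooting at all; the Pareto tail is then built in by construction. If you want to keep the forward construction you must (i) replace the mass-shooting by the exponent-matching condition $\ga=\th A(\infty;\ga,c)$ and prove continuity of this map in $\ga$, and (ii) acknowledge the free integration constant at $x=0$ and explicitly exclude the degenerate branch $\Phi\equiv1$.
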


\noindent In the proof of Lemma 3.6 in \cite{BLW2014} we state Lipschitz properties of the  maximisers of the right hand side of \eqref{e:S}. The lemma reads as:
\begin{lem} Let assumption \ref{a:alpha} be satisfied, $z >0$, $B \in \R$ and $S=S(B)$ be the optimal control for a given $B$ when maximizing $(1-s)x + \al(s) B$ over $s \in [0,1]$. If
\begin{align*}
\lim_{B\to0}\al''(S(B))B^3<0,
\end{align*}
then the maps
$B \to S(B)$, $B\to \al(S(B))$ and $B \to \al(S(B))B$ are Lipschitz.
\end{lem}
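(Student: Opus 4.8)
The plan is to analyze the scalar optimization problem $\max_{s\in[0,1]}\bigl[(1-s)x+\al(s)B\bigr]$ directly via its first-order condition and then extract the Lipschitz bounds from the implicit function theorem together with a careful small-$B$ expansion. First I would fix $x>0$ and observe that, by assumption \ref{a:alpha}, the objective is strictly concave in $s$ (since $\al''<0$), so the maximizer $S=S(B)$ is unique; moreover $\al'(0)=\infty$ forces $S(B)>0$ whenever $B>0$, and for $B\le 0$ the maximizer is $S(B)=0$ because the objective is then nonincreasing in $s$. Hence I may restrict attention to $B>0$ and to interior critical points, where the first-order condition reads
\begin{align*}
-x+\al'(S(B))\,B=0,\qquad\text{i.e.}\qquad \al'(S(B))=\frac{x}{B}.
\end{align*}
Since $\al'$ is continuous, strictly decreasing (because $\al''<0$) on the range where $S\in(0,1)$, it has a $C^\infty$ inverse there, so $S(B)=(\al')^{-1}(x/B)$ on that set; if the constraint $S=1$ becomes active for large $B$, then $S\equiv 1$ there and everything is trivially Lipschitz, so the only delicate regime is $B\to 0^+$, where $S(B)\to 0$.

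Next I would differentiate the first-order condition to get, for interior $S$,
\begin{align*}
S'(B)=\frac{x}{B^2\,\al''(S(B))},
\end{align*}
and then chase the consequences for the three maps. For $B\to S(B)$ the claim is that $B^2|\al''(S(B))|$ stays bounded below; this is exactly where the hypothesis $\lim_{B\to0}\al''(S(B))B^3<0$ enters. Writing $\al''(S(B))B^3\to -c<0$ gives $|\al''(S(B))|\sim c/B^3$, hence $|S'(B)|\sim x/(c\,B^{-1}) = xB/c\to 0$, so $S'$ is bounded near $0$ and $S$ is Lipschitz. For $B\to\al(S(B))$ I would compute $\frac{d}{dB}\al(S(B))=\al'(S(B))S'(B)=(x/B)\cdot x/(B^2\al''(S(B)))=x^2/(B^3\al''(S(B)))$, which by the same hypothesis tends to $-x^2/c$, hence is bounded; Lipschitz continuity follows. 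For $B\to \al(S(B))B$ I would use the product rule, $\frac{d}{dB}\bigl[\al(S(B))B\bigr]=\al(S(B))+\al'(S(B))S'(B)B = \al(S(B)) + x^2/(B^2\al''(S(B)))$; the first term vanishes as $B\to0$ since $S(B)\to0$ and $\al(0)=0$, and the second tends to $0$ because $B^2\al''\sim c/B\to\infty$ in magnitude, so this derivative is bounded as well.

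I would then patch the regimes together: on any compact subset of $B>0$ bounded away from the (possible) threshold where $S=1$ activates, $S$ is $C^\infty$ by the implicit function theorem and hence locally Lipschitz; at $B=0$ the one-sided derivative bounds above give Lipschitz continuity from the right, and for $B\le0$ all three maps are constant; finally at the threshold $B_\ast$ where $S$ hits $1$, left and right derivatives are both finite so no blow-up occurs. Combining a finite global Lipschitz constant from the bounded-derivative estimates yields the claim. \textbf{The main obstacle} is the behavior as $B\to 0^+$: there $S'(B)$ naively looks singular because of the $1/B^2$ factor, and only the precise rate $\al''(S(B))\sim -c/B^3$ supplied by the hypothesis defeats it; making this rigorous requires controlling $\al''(S(B))$ uniformly (not just along a sequence), which I would do by noting that $B\mapsto \al''(S(B))B^3$ extends continuously to $B=0$ with a strictly negative limit, so it is bounded away from $0$ on a neighborhood of the origin.
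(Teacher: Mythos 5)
Your proposal is correct and follows essentially the route the paper relies on (the case decomposition of Remark \ref{re.max} plus the first-order condition $\al'(S(B))=x/B$, with the hypothesis $\lim_{B\to0}\al''(S(B))B^3<0$ used precisely to keep $B^3|\al''(S(B))|$ bounded away from zero so that the derivatives of $S(B)$, $\al(S(B))$ and $\al(S(B))B$ stay bounded near $B=0$); the paper itself defers the detailed argument to \cite{BLW2014}. The only blemishes are cosmetic: implicit differentiation gives $S'(B)=-x/(B^2\al''(S(B)))$, so your three derivative formulas are each off by a sign (the magnitudes, and hence the Lipschitz bounds, are unaffected), and one should note that the hypothesis also covers the case where the limit is $-\infty$, which only strengthens the estimates.
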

\begin{rem}\la{re.max}
In the proof of the preceding lemma we identify the three possible cases when maximizing $(1-s)x + \al(s) B$ over $s \in [0,1]$:\\
Case 1: If $B \le 0$, then $S=0$.\\
Case 2: If $B \al'(1)\ge x$, then $S=1$.\\
Case 3: If $0<B \al'(1)< x$, then $\al'(S)=\f x B$.\\
Since we assume $\al'(0)=\infty$, we obtain that the maximiser $S$ is continuous in $B$ and $x$ for $x>0$.
\end{rem}

\subsection{Properties of Solutions}\label{s:propbgp}

Let us mention some properties related to the existence or nonexistence of balanced growth path (BGP) solutions in the following. First of all, we see that a certain behavior of the initial value at infinity is always needed for exponential growth from the results in \cite{BLW2014}, where it was shown that the support of a solution $f$ is always a subset of the support of the initial value. The relation to the specific tail of the initial value 
was further worked out in the case of a constant learning function $\alpha = \alpha_0$. Under this condition system \eqref{e:bgp2} decouples and we can calculate the BGP solutions explicitly (cf.  \cite{ABLLM2014,BLW2014}): 
\begin{thm} \label{theoremalphaconst}
Let assumption \ref{a:pareto} be satisfied and $\al = \alpha_0$. Then there exists a unique $(\Phi, v, 0)$ and a scaling constant $\gamma$ to \eqref{e:bgp2} given by
\begin{align*}
\gamma =  \alpha_0 \theta, \quad\Phi(x) =\f{1}{1+kx^{-1/\theta}},
\end{align*}
solving \eqref{e:bgp2}, \eqref{e:paretoF} and \eqref{e.ga0}.
\end{thm}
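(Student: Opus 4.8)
The plan is to exploit the fact that when $\al\equiv\alpha_0$ is constant the term $\al(s)\int_x^\infty[v(y)-v(x)]\phi(y)\,dy$ appearing in \re{e:v} and \re{e:S} no longer depends on $s$, so for every $x>0$ the maximiser is simply $S(x)=\arg\max_{s\in[0,1]}(1-s)x=0$. Hence $\sg\equiv 0$, and equation \re{e:phi} decouples from \re{e:v}: its right-hand side must be evaluated with $\al(\sg(y))\equiv\alpha_0$, which is exactly the regime in which the Boltzmann equation can be integrated explicitly.

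I would then solve \re{e:phi} as follows. Writing $\Phi(x)=\int_0^x\phi$ and using that $\phi$ is a probability density, so $\int_x^\infty\phi=1-\Phi(x)$, the right-hand side of \re{e:phi} with $\al\equiv\alpha_0$ becomes $\alpha_0\phi(x)\bigl(2\Phi(x)-1\bigr)=\alpha_0\bigl(\Phi(x)^2-\Phi(x)\bigr)'$, while its left-hand side is $-\ga\bigl(x\phi(x)\bigr)'$. Integrating once in $x$ and using $x\phi(x)\to0$ and $\Phi(x)\to0$ as $x\to0$ to discard the integration constant, I obtain the separable first-order ODE
\beq
\ga\, x\,\Phi'(x)=\alpha_0\,\Phi(x)\bigl(1-\Phi(x)\bigr).
\eeq
Separating variables and integrating (a logistic equation in the variable $\ln x$) gives $\Phi(x)/(1-\Phi(x))=Cx^{\alpha_0/\ga}$ for some constant $C>0$, i.e. $\Phi(x)=\bigl(1+C^{-1}x^{-\alpha_0/\ga}\bigr)^{-1}$. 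The two free constants are fixed by the remaining conditions: the Pareto-tail requirement \re{e.pareto} forces $1-\Phi(x)\sim C^{-1}x^{-\alpha_0/\ga}$ to equal $kx^{-1/\th}$, hence $\alpha_0/\ga=1/\th$, i.e. $\ga=\alpha_0\th$, and $C^{-1}=k$; this reproduces $\Phi(x)=1/(1+kx^{-1/\th})$, and one checks it is consistent with \re{e.ga0} since $\th\int_0^\infty\alpha_0\phi=\alpha_0\th$. Uniqueness of $(\Phi,\ga)$ is immediate because every step is an equivalence once the normalisations $\Phi(0)=0$, $\Phi(\infty)=1$ and \re{e.pareto} are imposed.

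Finally, for $v$: with $S\equiv0$ equation \re{e:v} reduces to the linear integro-differential equation $(r-\ga)v(x)+\ga x v'(x)=x+\alpha_0\int_x^\infty[v(y)-v(x)]\phi(y)\,dy$, which I would rewrite, using $\int_x^\infty\phi=1-\Phi$, as $\bigl(r-\ga+\alpha_0(1-\Phi(x))\bigr)v(x)+\ga x v'(x)=x+\alpha_0\int_x^\infty v(y)\phi(y)\,dy$ and solve via an integrating factor together with a contraction argument for the (weakly) nonlocal operator $v\mapsto\int_\cdot^\infty v\phi$ on a weighted space enforcing the natural linear growth $v(x)=O(x)$ at infinity; existence and uniqueness then follow provided $r>\ga$. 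I expect this last step, namely the correct choice of function space and the handling of the nonlocal term so that $v$ is uniquely pinned down, to be the only non-routine point, the $\Phi$-part and the tail matching being elementary once the decoupling $S\equiv0$ is noted; alternatively one may simply invoke the explicit construction of $v$ given in \cite{ABLLM2014,BLW2014}.
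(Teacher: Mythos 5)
Your derivation is correct and is essentially the argument the paper has in mind: Theorem \ref{theoremalphaconst} is stated without proof (it is recalled from \cite{ABLLM2014,BLW2014}), and the intended route is exactly yours --- constant $\al$ makes the nonlocal term in \re{e:S} independent of $s$, so $S\equiv 0$ for $x>0$, the Boltzmann equation decouples, and integrating $-\ga(x\phi)'=\alpha_0(\Phi^2-\Phi)'$ yields the logistic ODE $\ga x\Phi'=\alpha_0\Phi(1-\Phi)$, whose solution matched to the Pareto tail \re{e.pareto} forces $\ga=\alpha_0\th$, consistency with \re{e.ga0}, and $\Phi(x)=1/(1+kx^{-1/\th})$. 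Two small points are worth tightening. First, discarding the integration constant via $x\phi(x)\to 0$ tacitly excludes the degenerate solution $\phi=\delta_0$, $\ga=0$ discussed in Section \ref{s:propbgp}; it is the Pareto condition with $k>0$ that rules it out, so uniqueness should be phrased as uniqueness among solutions satisfying \re{e.pareto}. Second, for $v$ you do not need a contraction: differentiating $(r-\ga)v+\ga xv'=x+\alpha_0 B(x)$ and using $B'=-v'(1-\Phi)$ kills the nonlocal term and leaves the first-order linear ODE $rw+\ga xw'+\alpha_0(1-\Phi)w=1$ for $w=v'$, with $w(0)=1/(r+\alpha_0)$ forced by evaluating at $x=0$; the constant $v(0)$ is then recovered from the original equation at $x=0$, which requires $r\neq\ga$ and the integrability of $1-\Phi$, i.e. $\th<1$ --- a hypothesis you should state explicitly (the paper assumes it implicitly in its later Leray--Schauder argument), since otherwise $B(x)$ is not even finite for linearly growing $v$.
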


A general issue of the BGP-system \eqref{e:bgp2}, which has strong impact on the analysis and computation is the existence of degenerate solutions with $\gamma = 0$, $v \equiv x/r$ and $S \equiv 0$. In this case we see that necessarily $\Phi(x)=1$ for $x > 0$, which means that $\phi$ is a Dirac-$\delta$ concentrated at $x=0$. In order to exclude such in the analysis of BGP solutions and their numerical computation it is essential to construct solutions $\Phi$ that satisfy a specific Pareto tail condition with some $k > 0$, such that the Dirac-$\delta$ that would lead to 
$k=0$ is not admissible. Hence, we shall in particular consider a  transformation of variables from
$x$ to $\ti x = x^{-1/\theta}$ and solve the correspondingly transformed equation with an initial condition at $\ti x =0$. This also corresponds to the result in the case of constant $\alpha$ given in Theorem \ref{theoremalphaconst}: we can only expect uniqueness of the solution under the additional asymptotic condition \eqref{e:paretoF} at infinity, in particular for each $k$ we expect a different solution.

\section{Existence of BGP solutions}\label{s:existencebgp}

\noindent In this section we prove existence of solutions related to balanced growth. We start by  showing several analytic results for the rescaled Boltzmann equation and the Hamilton-Jacobi-Bellman
equation, which are necessary ingredients in the existence proof for the full system. Let us recall the full BGP system:
\begin{subequations}\la{e.bgp_full}
\begin{align}
&\ga  x \Phi'(x)  = [1-\Phi(x)]\int_0^x\al(S(y))\Phi'(y)\,dy \label{e.Phi}\\
&(r-\ga)v(x)+\ga x v'(x) = \max_{s\in\mathcal{S}}\left\{(1-s)x+\al(s)\int_x^\infty[v(y)-v(x)]\Phi'(y)\,dy \right\} \la{e.v2}\\
&S(x) = \arg \max_{s \in \mathcal{S}}\left\{(1-s)x+\al(s)\int_x^\infty[v(y)-v(x)]\Phi'(y)\,dy \right\}\la{e.S}\\
&\ga= \th \int_0^\infty \al(S(y))\Phi'(y)\,dy\la{e.ga}
\end{align}
\end{subequations}
where $\mathcal{S} = \lbrace s: \R^+ \to [0,1] \rbrace$ denotes the set of admissible controls. 

\noindent Based on their derivation and interpretation we shall call equation \eqref{e.Phi} the BGP-Boltzmann equation and \eqref{e.v2} the BGP-Hamilton-Jacobi-Bellman equation (or briefly BGP-HJB equation).

\subsection{Existence and Uniqueness for the BGP-HJB Equation}

First we discuss the existence and uniqueness of solutions $v = v(x)$ to \eqref{e.v2} given
a function $\Phi = \Phi(x)$ and a positive constant $\gamma \in \R^+$.

\noindent We start with a regularization of the BGP-HJB equation. Let us consider
\begin{align}
(r-\ga)v_\e(x)+\ga(x+\e) v'_\e(x)  &= \max_{s\in\Xi}\left\{(1-s)x+\al(s)\int_x^\infty[v_\e(y)-v_\e(x)]\Phi'(y)\,dy \right\} \la{e.vr}
\end{align}
for $\e \ge 0$ on $\R_+$ and denote the  maximiser by $S_\e$. 
\begin{lem}\la{le.v'>=0}
A solution $v_\e$ of equation \re{e.vr} with boundary condition $v'_\e(0) \ge 0$ is non-decreasing. The maximiser $S_\e$ is non-increasing.
\end{lem}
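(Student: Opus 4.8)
The plan is to argue by a monotonicity/maximum-principle style argument directly on the ODE \re{e.vr}, exploiting the structure of the maximizer described in Remark \ref{re.max}. First I would note that differentiating the right-hand side of \re{e.vr} with respect to $x$ is delicate because of the $\max$, so instead I would use the envelope theorem: at the optimal $s=S_\e(x)$ the $x$-derivative of the bracket equals $(1-S_\e(x)) + \al(S_\e(x))(-v_\e'(x))(1-\Phi(x))$ (the explicit $s$-dependence is frozen by optimality), where I have written $\int_x^\infty \phi = 1-\Phi$. Thus, wherever $v_\e$ is differentiable, $w:=v_\e'$ satisfies a first-order linear-type ODE obtained by differentiating \re{e.vr}:
\begin{align*}
(r-\ga)w(x) + \ga w(x) + \ga(x+\e)w'(x) = (1-S_\e(x)) - \al(S_\e(x))(1-\Phi(x))\,w(x),
\end{align*}
i.e. $\ga(x+\e)w'(x) = (1-S_\e(x)) - \bigl[r + \al(S_\e(x))(1-\Phi(x))\bigr]w(x)$. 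This is the key identity.

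Next I would run a comparison argument. Suppose, for contradiction, that $w(x_0)<0$ for some $x_0>0$; by continuity of $v_\e'$ (which follows from continuity of the maximizer in $x$, see Remark \ref{re.max}) and the hypothesis $w(0)\ge 0$, there is a first point $x_*\in[0,x_0)$ where $w(x_*)=0$ and $w<0$ immediately to the right. But at any point where $w=0$, Case 1 of Remark \ref{re.max} applies (since $B(x)=\int_x^\infty(v_\e(y)-v_\e(x))\phi(y)\,dy$ and $w=v_\e'$ small/nonpositive forces $B\le 0$ near there — more carefully, $w(x_*)=0$ with $w\le 0$ to the right gives $v_\e(y)-v_\e(x_*)\le 0$ for $y$ slightly larger, hence $B(x_*)\le 0$), so $S_\e(x_*)=0$ and the identity gives $\ga(x_*+\e)w'(x_*) = 1 - r\cdot 0 = 1 >0$. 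Hence $w$ is strictly increasing through $x_*$, contradicting $w<0$ to the right of $x_*$. This shows $w=v_\e'\ge 0$ everywhere, i.e. $v_\e$ is non-decreasing. I would handle the boundary behaviour at $x=0$ (where the coefficient $\ga(x+\e)$ degenerates only if $\e=0$) by treating $\e>0$ first and, if needed for $\e=0$, by the stated boundary condition $w(0)\ge 0$ together with the sign of the right-hand side at $0$.

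For the monotonicity of $S_\e$: once $v_\e$ is non-decreasing, $B(x)=\int_x^\infty (v_\e(y)-v_\e(x))\phi(y)\,dy$ is non-negative, and I would show $B$ is non-increasing in $x$ — differentiating, $B'(x) = -v_\e'(x)(1-\Phi(x)) \le 0$ since $v_\e'\ge 0$. Then, using the characterization of the maximizer in Remark \ref{re.max}: in Case 3, $\al'(S_\e(x)) = x/B(x)$; since $B$ is non-increasing and (by concavity $\al''<0$) $\al'$ is decreasing, the ratio $x/B(x)$ being increasing in $x$ forces $S_\e(x)$ to be non-increasing. In Case 1 ($B\le 0$, impossible here unless $B=0$) and Case 2 ($S_\e=1$, the boundary) monotonicity is immediate, and the transitions between regimes are monotone because the threshold $B\al'(1)\gtrless x$ is crossed once as $x$ increases. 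Gluing these three regimes gives that $S_\e$ is non-increasing on $\R_+$.

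The main obstacle I anticipate is the regularity/continuity justification needed to make the "first crossing point" argument rigorous: one must know a priori that a solution $v_\e$ exists with $v_\e'$ continuous so that $B$ and $S_\e$ are continuous (Remark \ref{re.max} gives continuity of $S_\e$ in $B$ and $x$, but one needs $B\in C^1$, which requires $v_\e\in C^1$, hence a bootstrap from \re{e.vr} itself). A secondary subtlety is the degeneracy of the coefficient $\ga(x+\e)x\mapsto \ga x$ at $x=0$ when $\e=0$, which is exactly why the lemma carries the hypothesis $v_\e'(0)\ge 0$ as an assumption rather than a conclusion; I would make sure the comparison argument only ever uses the ODE on $x>0$ and feeds in the boundary value as initial data.
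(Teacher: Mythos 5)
Your differentiated identity $\ga(x+\e)w'=(1-S_\e)-\left[r+\al(S_\e)(1-\Phi)\right]w$ for $w=v_\e'$ is correct (it is essentially the identity the paper itself derives later as \re{e:vprime}), and a first-crossing argument on $w$ is a viable strategy. However, the specific step you use at the crossing point fails: from $w(x_*)=0$ and $w\le 0$ immediately to the right of $x_*$ you cannot conclude $B(x_*)\le 0$, because $B(x_*)=\int_{x_*}^\infty\bigl(v_\e(y)-v_\e(x_*)\bigr)\phi(y)\,dy$ integrates over \emph{all} $y>x_*$, and nothing in your setup prevents $v_\e$ from rising above $v_\e(x_*)$ again for large $y$ (indeed the paper later shows $v'\to 1/r$, so $B>0$ is the generic situation). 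Hence you cannot infer $S_\e(x_*)=0$; and if instead $S_\e(x_*)=1$, your identity only yields $w'(x_*)=0$, which does not by itself forbid $w$ from dipping negative.

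The gap is easily repaired using your own identity, with no information about $S_\e(x_*)$ needed: wherever $w<0$ one has $(1-S_\e)\ge 0$ and $-\left[r+\al(S_\e)(1-\Phi)\right]w>0$, so $\ga(x+\e)w'>0$ throughout any interval where $w<0$; integrating from $x_*$ contradicts $w<0$ on $(x_*,x_*+\eta]$. With that fix the first claim follows. For comparison, the paper avoids differentiating the max altogether: it evaluates both sides of \re{e.vr} at $x_0$ and $x_0+\eta$ and notes that on an interval where $v_\e'<0$ the bracket $(1-s)x+\al(s)B(x)$ is increasing in $x$ for each fixed $s$ (since there $B'=-v_\e'(1-\Phi)\ge 0$), so the max increases while the left-hand side decreases. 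Your route needs the extra regularity ($v_\e\in C^1$, envelope theorem) that you flag yourself; the paper's does not. Your argument for the monotonicity of $S_\e$ coincides with the paper's and is fine.
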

\begin{proof}

Assume there exists an $x$ such that $v'_\e(x) <0$. Then there is 
 a $x_0$ and an $\eta$ small such that $v'_\e(x_0) =0$  and $v'_\e(x) <0$ for $x \in [x_0,x_0 + \eta]$. Considering the left-hand side of equation  \re{e.vr} it follows that 
$$(r-\ga)v_\e(x_0)+\ga x_0 v'_\e(x_0) >  (r-\ga)v_\e(x_0+\e)+\ga (x_0+\eta) v'_\e(x_0+\eta),$$
whereas both terms on the right-hand side  of equation  \re{e.vr} are increasing in $x$. This is a contradiction.
So for $\e \ge 0$ we have $v'_\e \ge 0$ everywhere. 

\noindent Next we show that $S_\e$ is non-increasing. Using  $B'_\e(x)= -v'_\e(x) (1-\Phi(x))$ and Remark \ref{re.max}, we have one of the two cases:
\begin{enumerate}
\item $B_\e(0) >0$: For small $x$, we have $B_\e(x) \al'(1)\ge x$ therefore $S = 1$. As $x$ increases, there is a unique point $x_0(S_\e)$ such that $B_\e(x_0(S_\e)) \al'(1)= x_0(S_\e)$. For $x$ larger than $x_0(S_\e)$ and $B_\e(x) >0$, we have $\al'(s)=\f x B$, so $S_\e$ is strictly decreasing. If there is a point $x_1$ such that $B_\e(x_1)=0$, then $B_\e(x)=0$ for $x \ge x_1$ and therefore the maximiser $S_\e$ is equal to $0$ on $[x_1,\infty)$.\\
\item $B_\e(0) =0$: In this case $B_\e$ is equal to $0$ on $[0,\infty)$ and therefore the maximiser $S_\e$ is equal to $0$ on $[x_1,\infty)$.
\end{enumerate}
In both cases, the maximiser $S_\e$ is non-increasing.
\end{proof}

\noindent From now on we shall use the definition of $x_0(S)$ (already introduced in the proof of the preceeding lemma) to be the unique point where $B(x_0(S)) \al'(1)= x_0(S)$.
\begin{rem}\la{re.xS}
For a non-increasing maximiser $S$  with $S(0)=1$ and $S(x) \to 0$ for $x\to \infty$, the following holds: for $x<x_0(S)$ we have $S(x)=1$ and for  $x>x_0(S)$ we have $S(x)<1$. So $x_0(S)$ is the point between the intervals where $S$ is equal to $1$ and $S$ less than $1$.
\end{rem}

\begin{lem}\la{le.v_const_on_interval}
A solution $v_\e$ of equation \re{e.vr} satisfying the boundary condition $v'_\e(0) = 0$ is constant on the interval $[0,x_0(S_\e)]$.
\end{lem}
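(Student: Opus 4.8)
The plan is to exploit that on the interval $[0,x_0(S_\e)]$ the optimal control is identically $1$, which reduces \re{e.vr} to a closed ODE that, after one differentiation, becomes \emph{linear and homogeneous} in $v'_\e$; the boundary condition $v'_\e(0)=0$ will then force $v'_\e\equiv 0$ on that interval. First I would dispose of the trivial case: if $B_\e(0)\le 0$ then, as in the case $B_\e(0)=0$ of the proof of Lemma~\ref{le.v'>=0}, $B_\e\equiv 0$ and $x_0(S_\e)=0$, so there is nothing to prove. So assume $B_\e(0)>0$. Since $v'_\e(0)=0\ge 0$, Lemma~\ref{le.v'>=0} gives that $v_\e$ is non-decreasing and $S_\e$ non-increasing, and Remark~\ref{re.xS} then gives $S_\e(x)=1$ for all $x\in[0,x_0(S_\e)]$. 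Inserting $s=1$ into \re{e.vr} (so that $(1-s)x=0$ and $\al(s)=\al(1)$), the equation on this interval reduces to
\[
(r-\ga)\,v_\e(x)+\ga(x+\e)\,v'_\e(x)=\al(1)\,B_\e(x),\qquad B_\e(x)=\int_x^\infty[v_\e(y)-v_\e(x)]\Phi'(y)\,dy .
\]

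Since $\Phi'\in L^1$, the function $B_\e$ is $C^1$ with $B'_\e(x)=-v'_\e(x)(1-\Phi(x))$ (the identity already used in the proof of Lemma~\ref{le.v'>=0}), and the displayed relation then shows $(x+\e)v'_\e\in C^1$, so $v_\e$ is $C^2$ on $(0,x_0(S_\e))$ (and up to $x=0$ when $\e>0$). Differentiating the displayed relation in $x$ and collecting the $v'_\e$ terms yields, on $(0,x_0(S_\e))$,
\[
\ga(x+\e)\,v''_\e(x)=-\bigl[\,r+\al(1)(1-\Phi(x))\,\bigr]\,v'_\e(x),
\]
a linear homogeneous first-order ODE for $w:=v'_\e$.

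It then remains to conclude that $w\equiv 0$. For $\e>0$ the coefficient $\bigl(r+\al(1)(1-\Phi(x))\bigr)/\bigl(\ga(x+\e)\bigr)$ is continuous and bounded on $[0,x_0(S_\e)]$, so by uniqueness for the initial value problem with $w(0)=v'_\e(0)=0$ one gets $w\equiv 0$. For $\e=0$ one argues on $(0,x_0(S_0)]$: if $w(x_1)\neq 0$ for some $x_1$ in this interval, then $w$ is nonzero throughout $(0,x_1]$ and
\[
w(x)=w(x_1)\exp\!\left(\int_x^{x_1}\frac{r+\al(1)(1-\Phi(y))}{\ga y}\,dy\right),
\]
so, the integrand being $\ge r/(\ga y)$ near $0$, the integral diverges to $+\infty$ as $x\to0^+$ and $|w(x)|\to\infty$, contradicting the continuity of $v'_\e$ together with $v'_\e(0)=0$. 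Hence $w\equiv0$ on $[0,x_0(S_\e)]$, i.e.\ $v_\e$ is constant there. The step I expect to be the main obstacle is exactly this $\e=0$ case: the ODE for $w$ has a coefficient singular at $x=0$, and one must use that this singularity has the correct sign so that the only $C^1$ solution compatible with $v'_\e(0)=0$ is the trivial one.
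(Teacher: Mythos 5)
Your proof is correct, but it follows a genuinely different route from the paper's. The paper argues by pure monotonicity without ever differentiating the equation: on $[0,x_0(S_\e)]$ the right-hand side of \re{e.vr} equals $\al(1)B_\e(x)$, which is non-increasing since $B_\e'=-v_\e'(1-\Phi)\le 0$, while the left-hand side at any $x$ is bounded below by its value at $0$ (using $v_\e$ non-decreasing, $v_\e'\ge 0$ and $v_\e'(0)=0$); since the two sides coincide, both are squeezed to the constant value $(r-\ga)v_\e(0)$, and the nonnegativity of each summand on the left then forces $v_\e\equiv v_\e(0)$. You instead differentiate once to obtain the homogeneous linear ODE $\ga(x+\e)v_\e''=-[r+\al(1)(1-\Phi)]v_\e'$ and conclude $v_\e'\equiv 0$ by ODE uniqueness for $\e>0$ and by the blow-up of the integrating factor at the singular point $x=0$ for $\e=0$. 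Your version costs a little more bookkeeping (the regularity needed to justify $B_\e\in C^1$ and $v_\e\in C^2$, plus the separate treatment of the degenerate cases $\ga=0$ and $B_\e(0)\le 0$, all of which you handle), and in exchange it yields $v_\e'\equiv 0$ directly and, notably, does not use the sign of $r-\ga$, whereas the paper's squeeze argument implicitly relies on $r\ge\ga$ (which holds in the regime where the lemma is later applied, via $\ga\le\th\al(1)<r$). Both arguments are sound; yours is slightly more general in that respect, the paper's is shorter and works uniformly in $\e$ without touching second derivatives.
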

\begin{proof}
On the interval $[0,x_0(S_\e)]$,  the maximiser $S_\e$ equals $1$, hence the right-hand side of equation \re{e.vr} equals $\alpha(1) B(x)$. Since $B'(x) = - v'(x) (1-\Phi(x))$, $\alpha(1) B(x)$ is non-increasing. We know from Lemma \ref{le.v'>=0}  that $v_\e' \geq 0$ and find on the left-hand side of equation \re{e.vr} that
$$(r-\ga)v_\e(0) = (r-\ga)v_\e(0) +\ga (0+\e) v'_\e(0) \le  (r-\ga)v_\e(x)+\ga (x+\e) v'_\e(x).$$ 
Thus, both sides of equation \re{e.vr} are non-increasing and non-decreasing, hence constant. Therefore $v_\e$ is constant.
\end{proof}

\begin{thm}\la{le.v_exist}
Let $\Phi \in L^\infty(\R^+)$ be a function with $\Phi' \in L^1_+(\R^+)$, $\Phi(0)=0$, $\lim_{x\to\infty}\Phi(x)=1$ and parameter $\gamma \ge 0$ be given. Then equation \re{e.v2} with the boundary condition $v'(0)=0$ has a solution $(v,S)$ with $v \in L^\infty_{\f 1 {1+x}}(\R^+)$ and $S \in L^\infty(\R^+)$. Moreover, $v$ satisfies 
\begin{equation} \label{e:vprimebounds}
	0 \leq v'(x) \leq \frac{1}r , \qquad \text{ for all } x \in \R^+.
\end{equation}
\end{thm}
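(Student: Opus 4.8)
The plan is to solve the regularised equation \eqref{e.vr} for each fixed $\e>0$ by a Schauder fixed point argument, to derive the bounds \eqref{e:vprimebounds} uniformly in $\e$, and then to pass to the limit $\e\to0$ (for $\ga>0$; the case $\ga=0$ reduces to an algebraic fixed point and is simpler). The observation that organises everything is that the nonlocal term $B(x)=\int_x^\infty(v(y)-v(x))\Phi'(y)\,dy$ depends on $v$ only through its derivative, since $v(y)-v(x)=\int_x^y v'$; the constant $v(0)$ enters only through the value $(r-\ga)v(0)=g(0)$ of the equation at $x=0$, which forces us to work in the regime $r>\ga$. Hence the genuine unknown is $u:=v'$. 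Differentiating \eqref{e.vr}, noting that the $s$-maximiser is unique (strict concavity in $s$ when $B>0$) and continuous (Remark~\ref{re.max}) so that the envelope theorem applies, and using $B'(x)=-u(x)(1-\Phi(x))$, one finds that \eqref{e.vr} is equivalent to the linear ODE
\begin{equation*}
  \ga(x+\e)\,u'(x)+\bigl[r+\al(S(x))(1-\Phi(x))\bigr]u(x)=1-S(x),\qquad u(0)=0,
\end{equation*}
coupled to the explicit nonlocal dependence $S(x)=\arg\max_{s\in[0,1]}\{(1-s)x+\al(s)B[u](x)\}$ of Remark~\ref{re.max}. Here the coefficient of $u$ is $\ge r>0$ and the right-hand side lies in $[0,1]$.

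For fixed $\e>0$ I would run the fixed point on the convex set $\mk_0=\{u\in C(\R^+):0\le u\le 1/r\}$ with the topology of local uniform convergence. Given $u\in\mk_0$, form $B[u]$, the maximiser $S$, the coefficient $c(x)=r+\al(S(x))(1-\Phi(x))$ and $g(x)=\max_{s\in[0,1]}\{(1-s)x+\al(s)B[u](x)\}$, and let $\mathcal T u$ be the unique solution of the linear ODE above. A comparison with the constants $0$ and $1/r$, in the spirit of the proof of Lemma~\ref{le.v'>=0}, gives $0\le\mathcal T u\le 1/r$, so $\mathcal T(\mk_0)\subset\mk_0$; for fixed $\e$ the ODE also yields an equi-Lipschitz bound on $\R^+$, so $\mathcal T(\mk_0)$ is precompact, and $\mathcal T$ is continuous because $u\mapsto B[u]$ is (dominated convergence, the integrand being controlled by $\tfrac{1}{r}\int_x^\infty(y-x)\Phi'(y)\,dy$, finite by the tail assumption on $\Phi$), $B\mapsto S$ is (Remark~\ref{re.max}), and a linear ODE depends continuously on its data. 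Schauder's theorem yields a fixed point $u_\e$; putting $v_\e(x)=\tfrac{1}{r-\ga}g(0)+\int_0^x u_\e(t)\,dt$ and checking that $H(x):=(r-\ga)v_\e(x)+\ga(x+\e)v_\e'(x)-g(x)$ satisfies $H'\equiv0$ (undoing the differentiation via the ODE) and $H(0)=0$ (the choice of $v_\e(0)$) shows that $v_\e$ solves \eqref{e.vr} with $v_\e'(0)=0$ and $0\le v_\e'\le 1/r$. Since $v_\e(0)\le\tfrac{\al(1)}{r(r-\ga)}\int_0^\infty y\Phi'(y)\,dy$, the family $\{v_\e\}$ is uniformly bounded in $L^\infty_{1/(1+x)}$ and equi-Lipschitz, and the corresponding maximisers $S_\e$ trivially lie in $L^\infty(\R^+)$.

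Finally, Arzel\`a--Ascoli provides a subsequence $v_{\e_n}\to v$ locally uniformly, with $v$ Lipschitz, $0\le v'\le 1/r$, and $v_{\e_n}'\rightharpoonup v'$ weakly-$*$. To remove the derivative I would integrate \eqref{e.vr}: with $\mu=(r-\ga)/\ga>0$,
\begin{equation*}
  v_\e(x)=(x+\e)^{-\mu}\Bigl[\e^\mu v_\e(0)+\tfrac{1}{\ga}\int_0^x(t+\e)^{\mu-1}g(t)\,dt\Bigr].
\end{equation*}
Since $B[u_\e](x)\to B[v](x)$ (dominated convergence, using the finite first moment of $\Phi'$) and $S_\e(x)\to S(x)$ (continuity of the maximiser), the integrands $g$ converge locally uniformly, and one passes to the limit to obtain the integrated --- hence, differentiating once, the differential --- form of \eqref{e.v2}. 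The boundary condition $v'(0)=0$ is inherited: each $v_\e$ is constant on its full-learning interval $[0,x_0(S_\e)]$ by Lemma~\ref{le.v_const_on_interval}, and one verifies this interval does not collapse in the limit (the degenerate case in which it does forcing $v\equiv 0$, which is trivial). The main obstacle is precisely this limiting step: one must simultaneously control the nonlocal term $B$ --- this is why the finite first moment guaranteed by the Pareto tail is essential --- the convergence of the non-smooth maximiser $S_\e$, and the degeneration of the coefficient $\ga(x+\e)$ at $x=0$, the last of which is what makes passing to the limit directly in $v_\e'$ delicate and dictates the detour through the integrated equation.
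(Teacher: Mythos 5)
Your overall architecture --- regularise via \eqref{e.vr}, run a fixed point for each fixed $\e>0$, extract the uniform bounds $0\le v_\e'\le 1/r$, and pass to the limit $\e\to 0$ --- is exactly the paper's, and your reduction to the ODE $\ga(x+\e)u'+[r+\al(S)(1-\Phi)]u=1-S$ for $u=v'$ via the envelope theorem is the same identity the paper derives by checking the two regimes $x<x_0(S)$ and $x>x_0(S)$ separately. Where you genuinely diverge is in the treatment of the additive constant $v(0)$: you observe that $B$, and hence the whole right-hand side, depends on $v$ only through $u=v'$, so you can run plain Schauder on the bounded convex set $\{0\le u\le 1/r\}$ and reconstruct $v(0)=g(0)/(r-\ga)$ afterwards. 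The paper instead keeps $v$ itself as the fixed-point unknown (pinning $v(0)=w(0)$ inside the operator), works in the weighted space $L^\infty_{1/(1+x^2)}$, and must invoke Leray--Schauder together with a separate a priori bound $w(0)\le \f1r\int_0^\infty(1-\Phi(y))\,dy$ to control the constant. Your route is arguably cleaner. Note that both arguments quietly use two hypotheses absent from the theorem as stated: $r>\ga$ (you make it explicit; the paper uses it implicitly when it bounds $w(0)$ by $B(0)$ at $\lambda=1$, since $(r-\ga)w(0)=\al(1)B(0)$ there) and $\int_0^\infty(1-\Phi(y))\,dy<\infty$, equivalently a finite first moment of $\Phi'$, which the paper imports as ``$\Phi$ has a Pareto tail with $\th<1$''.

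One step of your sketch is wrong as written: the parenthetical claim that the collapse $x_0(S_\e)\to 0$ forces ``$v\equiv 0$, which is trivial''. If the full-learning interval collapses then $B(0)=0$, hence $B\equiv 0$, $S\equiv 0$ and $v(x)=x/r$; this is not a harmless trivial case but a fatal one, since then $v'(0)=1/r\neq 0$ and the boundary condition fails. The paper's proof shows this scenario is \emph{excluded} by the hypotheses on $\Phi$: $B\equiv 0$ together with $v(x)=x/r$ forces $\Phi'$ to concentrate at the origin, contradicting $\Phi'\in L^1_+(\R^+)$ with $\Phi(0)=0$ and $\lim_{x\to\infty}\Phi(x)=1$. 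You need to run that exclusion argument rather than dismiss the case; once $x_0(S)>0$ is secured, Lemma \ref{le.v_const_on_interval} gives constancy of $v$ near the origin in the limit and hence $v'(0)=0$, exactly as you say.
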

\begin{proof}
The proof is divided into several steps. First we deduce an equivalent formation of the original formulation to construct a fixed point operator. Then we proof existence using the Leray Schauder theorem.\\
\noindent For $x<x_0(S)$ we differentiate the equation for $v$ once  and obtain
$$r v' + \ga x v''+ \al(S) v' (1-\Phi(x)) = 0= 1-S.$$
If we differentiate \eqref{e.v2} for  $x>x_0(S)$ we obtain
$$r v' + \ga x v'' = 1-S - S'x + \al'(S)S'B + \al(S) B' = 1-S - S'x +  \f x B S'B + \al(S) B'. $$
So to the left and to the right of $x_0(S)$, we have
\begin{align}
r v' +\ga x v''+ \al(S) v' (1-\Phi(x)) = 1-S. \label{e:vprime}
\end{align}
As an integral of a $L^1$-function the term $1-\Phi(x)$  is continuous and according to Remark \ref{re.max}, $S$ is continuous. So all coefficients are continuous. Therefore the ODE \re{e:vprime} holds everywhere in $[0,\infty)$. This means finding a solution $(v,S)$ to equations \re{e.v2} and \re{e.S} is equivalent to finding a solution $(v,S)$ to equations \re{e:vprime} and \re{e.S} and motivates the following approach:
We solve the ODE \re{e:vprime} with initial condition $v'(0)=0$ such that also equation  \re{e.S} is satisfied.

\noindent First we define the fixed point operator for the
 regularized equation \re{e.vr} with a small parameter $\e > 0$. Let 
$$M:=\{ w :  w \ge 0 \text{ continuous with } 0 \le w' \le \f 1 r \}$$ 
and  $P:M\rightarrow M$ be the operator defined in the following way:  Given $w$ we define 
$$\ti S :=\arg \max_{s\in\Xi}\left\{(1-s)x+\al(s)\int_x^\infty[w(y)-w(x)]\Phi'(y)\,dy \right\}.$$ 
Using this maximiser $\ti S$ we solve the ODE 
\beq r v' + \ga (x+\e) v''+ \al(\ti S) v' (1-\Phi(x)) = 1-\ti S,
\la{e.vtiS}
\eeq
for $v' $ with $v'(0)=0$. Note that we have dropped the index $\e$ to increase readability.
Integrating with $v(0) = w(0)$ gives $v=:P(w)$.
\noindent We need to show that the operator $P$ is self-mapping, compact and continuous with respect to the norm $\|v\|_{L^\infty_{\f 1 {1+x^2}}}=\|\f v {1+x^2}\|_\infty$.
\subsubsection*{Self-mapping} 
By maximum principle, we have $0\le v' \le \f{1}{r}$. So the operator $P$ is a self-mapping. 

\subsubsection*{Compactness}
Taking a sequence $w_k$ bounded in $L^\infty_{\f 1 {1+x^2}}$-norm, we obtain that $\f {v_k}{1+x^2}$ as well as
\begin{align*}
\left(\f {v_k} {1+x^2} \right)' = \f{v'_k}{1+x^2} - \f{2 v_k x}{(1+x^2)^2}
\end{align*}
is bounded in the $L^\infty$-norm. Since  $\f {v_k}{1+x^2} x $ remains bounded, we have a momentum bound which implies compactness.

\subsubsection*{Continuity}
Let us take a sequence $w_k$ converging to $w$ in $L^\infty_{\f 1 {1+x^2}}$-norm and define 
\begin{align*}
v_k := M(w_k) \text{ and } v := M(w).
\end{align*}
For every subsequence $v_k$ by compactness there is a further subsequence $v_k$ converging to a $v_\infty$. By taking a further subsequence, we can ensure that $v'_k$ converges weakly-$\star$ to $v'_\infty$. Moreover, $v_k$ converges pointwise and by dominated convergence also $B_k$ converges pointwise. Therefore the maximiser $\ti S_k$ converges pointwise and we have convergence of $\ti S_k$ and $\al(\ti S_k)$ in $L^1_{loc}$.
By passing to the limit $k\to \infty$ in the weak formulation of the equation 
$$r v'_k + \ga (x+\e)v''_k+ \al(\ti S_k) v' _k(1-\Phi(x)) = 1-\ti S_k$$
and using uniqueness, we have $v=v_\infty$ and therefore $v_k$ converging to $v$ in $L^\infty_{\f 1 {1+x^2}}$-norm. 

\subsubsection*{Bounded subset of $M$}
To apply Leray-Schauder fixed point theorem, we define 
$\ti M := \{w : w = \lambda Pw \text{ for } 0 \le \lambda \le 1 \}.$
Since we require $w(0)=(Pw)(0)$ in the definition of the operator $P$,  for $w \in \ti M$ we have either  $w(0)=0$ or $\lambda=1$. If $w(0)=0$, with the bound $0 \le w' \le  \lambda \f 1 r \le \f 1 r $, it follows that all those $w$ are uniformly bounded in $L^\infty_{\f 1 {1+x^2}}$. 
For $\lambda=1$, we have $w(0) = \int_0^\infty[w(y)-w(0)]\Phi'(y)\,dy = \int_0^\infty w'(y)(1-\Phi(y)) dy  \le  \int_0^\infty \f 1 r(1-\Phi(y))\,dy$.
Since $\Phi$ has a Pareto-tail with $\theta <1$ and $0 \le w' \le \f 1 r$, the set $\ti M$ is bounded.
By the Leray-Schauder theorem we obtain a fixed point. 

\subsubsection*{Limit $\e \to 0$}
Including the $\e$-dependence, we have a solution to the equation
\begin{multline}
(r-\ga)v_\e(x)+\ga (x+\e) v'_\e(x)  = \max_{s\in\Xi}\left\{(1-s)x+\al(s)\int_x^\infty[v_\e(y)-v_\e(x)]\Phi'(y)\,dy \right\} \\
=(1-S_\e)x+\al(S_\e)\int_x^\infty[v_\e(y)-v_\e(x)]\Phi'(y)\,dy
\la{e.v_e}
\end{multline}
We still obtain $0\le v'_\e \le \f{1}{r}$, and therefore as in the paragraph on compactness, we can extract a subsequence $v_\e$ converging to $v$ in $L^\infty_{loc}$. Moreover, we also obtain $S_\e \to S$ and $\al(S_\e) \to \al(S)$ in $L^1_{loc}$. With Remark \ref{re.max} this is enough to pass to the limit.\\
We extract a further subsequence such that $x_0(S_\e)$ converges to $x_0(S)$. Assume there exists an $x_0(S)$ that is equal to $0$. Then $B(0)$ is equal to $0$ which implies that $B=0$ everywhere. Hence $S=0$. It follows that $v(x)=x/r$ and therefore $\phi$ is a Dirac delta at $0$, which is a contradiction. Therefore $x_0(S)$ has to be larger than $0$.\\
By Lemma \ref{le.v_const_on_interval}, the solution $v_\e$ is constant on $[0,x_0(S_\e)]$, so the limit $v$ is constant on $[0,x_0(S)]$ and therefore the solution $v$ satisfies the boundary condition $v'(0)=0$. The identity \eqref{e:vprimebounds} follows from the construction.
\end{proof}

\noindent Under the appropriate boundary condition on $v'(0)$ we can further prove uniqueness for the BGP-HJB equation:

\begin{lem}Under the assumptions of Lemma \ref{le.v_exist}, the solution $(v,S)$ of \re{e.vr} with $v'(0)=0$ is unique.
\end{lem}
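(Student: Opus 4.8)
The plan is to show $v_1-v_2\equiv0$ for any two solutions by a Gronwall estimate resting on three ingredients: the equivalent first order ODE formulation derived in the proof of Theorem~\ref{le.v_exist}; the Lipschitz dependence of the maximiser $S$ and of $\al(S)$ on their argument $B$; and the integrability of $1-\Phi$ forced by the Pareto condition \ref{a:pareto} with $\th<1$.

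First I would take two solutions $(v_1,S_1),(v_2,S_2)$ of \re{e.vr} with $v_i'(0)=0$. By Theorem~\ref{le.v_exist} each satisfies $0\le v_i'\le\f1r$, is constant on $[0,x_0(S_i)]$ with $x_0(S_i)>0$, and, as in the proof of that theorem, $(v_i,S_i)$ solves
\[
r v_i' + \ga(x+\e) v_i'' + \al(S_i)\,v_i'\,(1-\Phi) = 1-S_i, \qquad v_i'(0)=0,
\]
together with the pointwise optimality of Remark~\ref{re.max}, which fixes $S_i$ in terms of $B_i(x)=\int_x^\infty(v_i(y)-v_i(x))\Phi'(y)\,dy$. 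Writing $w=v_1-v_2$, $\delta=w'$ and $\al(S_1)v_1'-\al(S_2)v_2'=\al(S_1)\delta+v_2'(\al(S_1)-\al(S_2))$, subtraction gives the linear nonlocal ODE
\[
r\delta + \ga(x+\e)\delta' + \al(S_1)(1-\Phi)\delta = (S_2-S_1) - v_2'\,(1-\Phi)\,\bigl(\al(S_1)-\al(S_2)\bigr), \qquad \delta(0)=0 .
\]
Moreover $\delta\equiv0$ on $[0,x_*]$ with $x_*:=\min(x_0(S_1),x_0(S_2))>0$, which keeps us away from the degeneracy of the ODE at the origin.

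Next I would estimate the right-hand side. Since $S_i=S(B_i,x)$, the Lipschitz properties of $B\mapsto S(B)$ and $B\mapsto\al(S(B))$ recalled above (whose hypothesis $\lim_{B\to0}\al''(S(B))B^3<0$ is exactly what is needed to control the tail, where $B_i\to0$) give $|S_1-S_2|+|\al(S_1)-\al(S_2)|\le L|B_1-B_2|$; together with $0\le v_2'\le\f1r$, $0\le1-\Phi\le1$, and the identity $B_1-B_2=B[w]$ with $(B[w])'=-\delta(1-\Phi)$ and $B[w](\infty)=0$, which yields $|B_1(x)-B_2(x)|\le\int_x^\infty|\delta(y)|(1-\Phi(y))\,dy$, the equation becomes
\[
r\delta + \ga(x+\e)\delta' + \al(S_1)(1-\Phi)\delta = R(x), \qquad |R(x)|\le C\int_x^\infty|\delta(y)|(1-\Phi(y))\,dy .
\]
Solving on $[x_*,\infty)$ with the integrating factor $\mu(x)=\exp\!\bigl(\int_{x_*}^x\f{r+\al(S_1(t))(1-\Phi(t))}{\ga(t+\e)}\,dt\bigr)$ (so $\mu(x_*)=1$, $\delta(x_*)=0$) expresses $\delta$ through an integral operator whose kernel involves $1-\Phi$; using $\mu(t)/\mu(x)\le\bigl(\f{t+\e}{x+\e}\bigr)^{r/\ga}$ and $\int_0^\infty(1-\Phi)<\infty$ one closes a Gronwall inequality for $\sup_{[x_*,\infty)}|\delta|$, forcing $\delta\equiv0$, hence $w\equiv\mathrm{const}$. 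Finally $w\equiv0$: if $w$ is constant then $B[w]\equiv0$, so the right-hand side of \re{e.vr} is unchanged while the left-hand side changes by $(r-\ga)w$, whence $w=0$ (using $r\ne\ga$, as holds for the non-degenerate solutions we consider).

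The main obstacle is the nonlocal coupling through $B$: because $R(x)$ depends on $\delta$ on all of $[x,\infty)$, a naive Gronwall started at $x_*$ does not close — the tail of $\delta$ is only controlled a priori by $|\delta|\le\f1r$. I would resolve this by exploiting the behaviour at infinity: from the ODE one checks $v_i'(x)\to\f1r$ as $x\to\infty$, so $\delta(\infty)=0$ and $B_i(x)=O(x^{1-1/\th})$, and the estimate can then be run ``from infinity'', or, equivalently, globally on $[x_*,\infty)$ with a polynomial weight adapted to the decay of $1-\Phi$; it is precisely the Pareto integrability $\int^\infty t^{-1/\th}\,dt<\infty$ (i.e.\ $\th<1$) that turns the weighted nonlocal operator into a contraction. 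The reduction to the first order ODE and the vanishing of $\delta$ on $[0,x_*]$ are routine, being already contained in the proof of Theorem~\ref{le.v_exist}.
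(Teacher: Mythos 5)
Your reduction to the difference equation for $\delta=v_1'-v_2'$ and the bound $|B_1-B_2|\le\int_x^\infty|\delta|(1-\Phi)$ are fine, but the step where you ``close a Gronwall inequality'' is a genuine gap, and you have in fact put your finger on exactly the place where it fails. The inequality you obtain is of the form $\sup|\delta|\le \f{C}{\ga}\bigl(\int_{0}^\infty(1-\Phi)\bigr)\sup|\delta|$: the outer integral (from the integrating factor) runs over $[x_*,x]$ while the inner one runs over $[x,\infty)$, so the composed kernel covers the whole half-line at every iteration. This is not a Volterra structure, iteration produces only a geometric factor $(CA/\ga)^n$ rather than $1/n!$, and there is no reason for $CA/\ga$ to be less than $1$ (the constant $C$ contains the Lipschitz constants of $S(B)$ and $\al(S(B))$, and $A=\int(1-\Phi)$ is whatever the Pareto tail gives). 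Integrating ``from infinity'' does not rescue this: the integrating factor grows like $x^{r/\ga}$ in that direction, the resulting integral need not converge, and even when it does you again land on a fixed constant that is not small. Taking absolute values everywhere also throws away the one piece of structure that can substitute for smallness, namely the monotone coupling between $B$ and $S$.

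The paper's proof uses precisely that structure instead. First, since $S$ is a pointwise function of $(x,B)$ by Remark \ref{re.max} and $B'=-v'(1-\Phi)$, the pair $(v',B)$ solves a \emph{local} first-order ODE system; once the initial data agree --- and $B(0)$ is determined by $v(0)$ through $(r-\ga)v(0)=\al(1)B(0)$ --- Picard--Lindel\"of gives uniqueness outright, with no nonlocal term left to estimate. This disposes of the case $v_1(0)=v_2(0)$. For $v_1(0)>v_2(0)$ one has $B_1(0)>B_2(0)$, hence $S_1\ge S_2$ wherever the ordering of the $B$'s persists, hence (by the maximum principle applied to the difference equation, using $v_i'\to\f1r$) $v_1'\le v_2'$, hence $B_1'\ge B_2'$; so the gap $B_1-B_2$ can never close, contradicting $B_1(x),B_2(x)\to0$ as $x\to\infty$. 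In short: the problem is resolved by a shooting/comparison argument in the single scalar $v(0)$, not by a contraction estimate, and your proposal is missing this idea. (A minor shared caveat: both your final step $w=\mathrm{const}\Rightarrow w=0$ and the paper's case distinction implicitly use $r\neq\ga$, which holds in the regime $r>\th\al(1)$ where the BGP system is eventually solved.)
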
 
\begin{proof}
Let us assume there are two solutions $v, w$ for a given $\Phi$ with Pareto-tail satisfying
$$(r-\ga)v(x)+\ga x v'(x)  =  \max_{s\in\Xi}\left\{(1-s)x+\al(s)\int_x^\infty[v(y)-v(x)]\Phi'(y)\,dy \right\}  $$
$$(r-\ga)w(x)+\ga x w'(x)  =  \max_{s\in\Xi}\left\{(1-s)x+\al(s)\int_x^\infty[w(y)-w(x)]\Phi'(y)\,dy \right\}.$$
Let us define $B_v(x) := \int_x^\infty[v(y)-v(x)]\Phi'(y)\,dy$,  $B_w(x) := \int_x^\infty[w(y)-w(x)]\Phi'(y)\,dy$ and write 
 the corresponding equations for $v'$ and $w'$:
 $$r v' + \ga xv''+ \al(S_v) v' (1-\Phi(x)) = 1-S_v,$$
 $$r w' + \ga xw''+ \al(S_w) w' (1-\Phi(x)) = 1-S_w.$$
 Taking the difference of these equations, we obtain
 \beq
 r (v'-w') + \ga x (v'-w')' +   (1-\Phi(x)) (\al(S_v)v'-\al(S_w)w') = S_w - S_v. \la{e.v'-w'}
 \eeq
 We consider the following cases:
\begin{enumerate}
\item If $v(0)=w(0)$ then uniqueness follows from Picard-Lindel\"of due to the Lipschitz-properties of the maximiser $S$,
\item Assume that $v(0)\not=w(0)$ and wlog that $v(0) >w(0)$.\\
Then the inequality $B_v(0) >  B_w(0)$ holds, therefore we have 
\begin{align*}
S_v(x)= 1 \ge S_w(x) \text{ on the interval } [0, \al'(1)B_v(0)] \supset [0, \al'(1)B_w(0)].
\end{align*}
If $B_v(x) >  B_w(x)$ for all $x$, then $S_v(x) \ge S_w(x)$ since 
$\al'(S_v)=\f x {B_v}$ and $\al'(S_w)=\f x {B_w}$ hold on the interval $[\al'(1)B_v(0),\infty)$. 
Since we have $\lim_{x\to\infty} v'(x) = \lim_{x\to\infty} w'(x) = \f 1 r$  by maximum principle applied to equation \re{e.v'-w'}, we obtain $v'\le w'$.  
This inequality leads to 
\begin{align*}
B'_w = -w' (1-\Phi)\le -v' (1-\Phi) = B'_v
\end{align*}
and therefore $\lim_{x\to \infty} B_w(x) < \lim_{x\to \infty} B_v(x)$, which is a contradiction.\\
If there exists an $x$ such that $B_v(x) =  B_w(x)$, let us call the minimal $x$ with this property $x_0$. As before on the interval $[0,x_0]$ we have $S_v(x) \ge S_w(x)$. Therefore by maximum principle this gives $v'\le w'$ and therefore the contraction $B_w(x_0) <  B_v(x_0)$.
\end{enumerate}
\end{proof}

\noindent Finally we derive a bound for $v$ related to its (asymptotically) linear growth:
\begin{lem}
Under the assumptions of Lemma \ref{le.v_exist}, the solution $(v,S)$ of \re{e.vr} with boundary condition $v'(0)=0$ satisfies
\begin{equation}\label{e:vbounds}
	v(x) \left\{ \begin{array}{lll} \geq \frac{x}r  & \text{ for all } x \in \R_+, &  \text{if } \gamma < r,  \\ \leq \frac{x}r  & \text{ for all } x \in \R_+, &  \text{if } \gamma > r .\end{array} \right.
\end{equation}
\end{lem}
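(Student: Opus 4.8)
The plan is to compare $v$ with the explicit affine function $w(x):=x/r$ through a first-order differential inequality, using only the a priori bound $v'\le 1/r$ from \eqref{e:vprimebounds} together with the suboptimality of the control $s\equiv 0$. First I would insert $s=0$ into the maximum on the right-hand side of \eqref{e.v2} and invoke $\al(0)=0$ from \ref{a:alpha}, which yields the pointwise lower bound $(r-\ga)v(x)+\ga x v'(x)\ge x$ for every $x\ge 0$. A direct computation gives $(r-\ga)w(x)+\ga x w'(x)=(r-\ga)\tfrac{x}{r}+\ga x\tfrac1r=x$. Subtracting, the difference $g:=v-w$ satisfies
\[
(r-\ga)\,g(x)+\ga x\,g'(x)\ \ge\ 0 \qquad \text{for all } x\ge 0 ,
\]
while $g'(x)=v'(x)-\tfrac1r\le 0$ for all $x$ by \eqref{e:vprimebounds}.

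The conclusion then follows from a pointwise sign argument. Assume first $\ga<r$ and, towards a contradiction, that $g(\bar x)<0$ at some $\bar x\ge 0$. If $\bar x=0$ the displayed inequality reduces to $(r-\ga)g(0)\ge 0$, forcing $g(0)\ge 0$, a contradiction. If $\bar x>0$, then $(r-\ga)g(\bar x)<0$, so the inequality forces $\ga\bar x\,g'(\bar x)\ge -(r-\ga)g(\bar x)>0$, hence $g'(\bar x)>0$, contradicting $g'\le 0$. Therefore $g\ge 0$ on $\R_+$, i.e.\ $v(x)\ge x/r$. The case $\ga>r$ is symmetric: if $g(\bar x)>0$ for some $\bar x\ge 0$, then for $\bar x=0$ the displayed inequality gives $(r-\ga)g(0)\ge 0$ with $r-\ga<0$, hence $g(0)\le 0$, a contradiction, while for $\bar x>0$ it gives $\ga\bar x\,g'(\bar x)\ge (\ga-r)g(\bar x)>0$, again contradicting $g'\le 0$; hence $g\le 0$, i.e.\ $v(x)\le x/r$.

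This argument is short once the differential inequality is in place, and I do not anticipate a substantive obstacle. The only point that needs care is that \eqref{e.v2} and the bound $v'\le 1/r$ hold in a genuinely pointwise sense, which is ensured by Theorem \ref{le.v_exist}: the equivalent ODE \eqref{e:vprime} has continuous coefficients, so $v$ is classically differentiable and the stated bounds hold everywhere. The only mildly non-obvious ideas are recognising that the right comparison function is exactly $x/r$ and that the \emph{suboptimal} choice $s\equiv 0$ already produces the needed one-sided estimate, so that neither the detailed structure of the maximiser $S$ nor the behaviour of $v$ at infinity enters the proof.
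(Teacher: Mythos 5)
Your proposal is correct and uses exactly the two ingredients of the paper's own proof: the lower bound $(r-\ga)v(x)+\ga x v'(x)\ge x$ obtained from the suboptimal choice $s=0$, and the derivative bound $v'\le 1/r$. The paper simply substitutes $v'\le 1/r$ directly to get $(r-\gamma)\bigl(v(x)-\tfrac{x}{r}\bigr)\ge 0$ in one line, whereas you package the same facts as a pointwise contradiction argument for $g=v-x/r$; this is a presentational difference only.
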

\begin{proof}
Since the maximum at the right-hand side of \re{e.vr} can be estimated from below by the value at $S=0$, we have
$$ (r-\gamma) v(x) + \gamma v'(x) x \geq x. $$
Due to the upper bound on $v'$ this implies
$$ (r-\gamma) \left( v(x) - \frac{x}r \right) \geq 0 $$
and hence the assertion.
\end{proof}

\subsection{Existence and Uniqueness of Solutions to the BGP-Boltzmann equation with given Pareto-tail}

We shall develop a more refined strategy than in \cite{BLW2014} and use the following variable transformation and notations:
\begin{align*}
\ti x := x^{-1/\th} \text{ and } \Phi(x) =: 1-    \frac{\gamma}\theta K (\ti x) \ti x,
\end{align*}
where $\th$ and $k$ denote the Pareto indizes in  \re{e.Phi_tail}. We will determine the appropriate $\gamma$ from $\th$ and $K$ as 
\begin{equation}
	\gamma = \frac{\theta}{\lim_{\ti x \rightarrow \infty} K(\ti x) \ti x},
\end{equation}
in order to guarantee that $\Phi(0)=0$. On the other hand we have the limit 
$$ \frac{\gamma}\theta K(0)=\lim_{\ti x \rightarrow 0} \frac{\gamma}\theta K(\ti x) = \lim_{x \rightarrow \infty} \frac{1-\Phi(x)}{x^{-\frac{1}{\theta}}} = k, $$
i.e., we can use the initial value $K(0)=\frac{\theta k}\gamma$. Since $\gamma$ is unknown a-priori but determined as the limit $\ti x \rightarrow \infty$, we instead use an arbitrary initial value $\ti k$ and subsequently determine $k=\frac{\gamma}\theta \ti k$. Moreover, since $K$ is normalized by $\gamma$ and $\theta$, equation \eqref{e.Phi} reads as
\beq
\ti x K'= -K \int_0^{\ti x} \al(\ti S)(K\xi)'\,d\xi.\la{e.K1}
\eeq
Hence we look for a solution with the constraint 
\beq
\int_0^\infty \al(\ti S)(K\xi)'\,d\xi =1. \la{e.K=1}
\eeq

\noindent Adding $K$ on both sides we can write the equation for $K$ in the alternative form
\beq
(\ti x K(\ti x))'= K(\ti x)\left( 1- \int_0^{\ti x} \al(\ti S)(K(\xi)\xi)'\,d\xi \right).\la{e.K1a}
\eeq

\noindent We recall that $\ti S$ is the maximiser $S$ in the new variable $\ti x$, in particular $\ti S$ is non-decreasing and we define $\tx_0(\ti S)$ as the point such that below $\tx_0(\ti S)$ the maximiser $\ti S$ is less than $1$ and above $\tx_0(\ti S)$  the maximiser $\ti S$ equals $1$.

\begin{lem} Let $\ti S$ be non-decreasing, then
equation \re{e.K1} with $K(0)=\ti k > 0$ has a unique continuous solution with values in $[0,\ti k]$, satisfying in addition 
\begin{equation} \label{e.Kconstraints}
	K'(\ti x) \leq 0, \qquad (K(\xi)\xi)' \geq 0, \qquad \int_0^{\ti x} \al(\ti S(\xi))(K(\xi)\xi)'\,d\xi \leq 1, 
\end{equation}
for all $\xi \geq 0$. Additionally, if $\ti S$ is different from $0$, the solution satisfies the constraint \re{e.K=1}.
\end{lem}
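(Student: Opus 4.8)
I would proceed in four moves: (i) produce a local solution of \re{e.K1} near $\ti x=0$, where the coefficient $1/\ti x$ is singular but the singularity turns out to be removable; (ii) establish a closed‑form expression for $G(\ti x):=\int_0^{\ti x}\al(\ti S(\xi))(K(\xi)\xi)'\,d\xi$ that at once gives $0\le G(\ti x)<1$; (iii) deduce from it the monotonicity and the bounds in \re{e.Kconstraints}, exclude finite‑$\ti x$ breakdown, and obtain a unique global solution; and (iv) read off the normalisation \re{e.K=1} from the same expression. For the local step, note that along any solution $(K\xi)'=K+\xi K'=K(1-G)$, so $G$ solves $G'=\al(\ti S)K(1-G)$ with $G(0)=0$, while \re{e.K1} reads $(\log K)'=-G/\ti x$; hence the problem is equivalent to the system $K(\ti x)=\ti k\exp(-\int_0^{\ti x}G(\eta)/\eta\,d\eta)$, $G(\ti x)=\int_0^{\ti x}\al(\ti S(\xi))K(\xi)(1-G(\xi))\,d\xi$. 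On the set where $\ti k/2\le K\le\ti k$ and $0\le G$ one has $0\le G(\eta)\le\al(1)\ti k\eta$, so $G(\eta)/\eta$ is bounded and the right‑hand side maps a small ball of $C([0,\delta])^2$ into itself; measuring the $G$‑component in the weighted norm $\sup_{\ti x}|G(\ti x)|/\ti x$ turns it into a contraction for $\delta$ small, so Banach's theorem gives a unique local solution with $K(0)=\ti k$. (Equivalently, after the change of variable $\tau=\log\ti x$ the system is of Carathéodory type with coefficient $\al\circ\ti S\in L^\infty_{loc}$, to which standard existence and uniqueness theory applies.)

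The crucial observation is that $(1-G)'=-\al(\ti S)K(1-G)$ with $G(0)=0$ integrates to $1-G(\ti x)=\exp(-\int_0^{\ti x}\al(\ti S(\xi))K(\xi)\,d\xi)$ on any interval of existence. On the maximal interval $[0,\ti x^\ast)$ on which $K>0$ this forces $G\in[0,1)$, hence $K'=-KG/\ti x\le 0$ (so $0<K\le\ti k$) and $(K\ti x)'=K(1-G)\ge 0$. Moreover $K'\ge-K/\ti x$ yields $K(\ti x)\ge K(\ti x_1)\ti x_1/\ti x>0$ for $0<\ti x_1<\ti x<\ti x^\ast$, so $K$ cannot reach $0$ at a finite point and $\ti x^\ast=\infty$; since $K$ and $G$ then stay bounded and $K$ is bounded below on compact $\ti x$‑sets, the local solution extends to all of $[0,\infty)$, and uniqueness propagates by the usual continuation argument. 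This establishes \re{e.Kconstraints}, the last inequality being $G(\ti x)<1$.

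For the normalisation I would use the same identity: $\int_0^\infty\al(\ti S)(K\xi)'\,d\xi=\lim_{\ti x\to\infty}G(\ti x)=1$ precisely when $\int_0^\infty\al(\ti S(\xi))K(\xi)\,d\xi=\infty$. If $\ti S\not\equiv0$ then, $\ti S$ being non‑decreasing, there is $\ti x_1$ with $\ti S(\ti x_1)>0$; since $\al$ is increasing with $\al(0)=0$, $\al(\ti S(\xi))\ge\al(\ti S(\ti x_1))=:c_0>0$ for $\xi\ge\ti x_1$, and combined with $K(\xi)\ge K(\ti x_1)\ti x_1/\xi$ this gives $\int_{\ti x_1}^\infty\al(\ti S)K\ge c_0K(\ti x_1)\ti x_1\int_{\ti x_1}^\infty d\xi/\xi=\infty$, hence \re{e.K=1}. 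I expect the only genuinely delicate point to be the local existence in step (i): the reformulation and the weighted norm must be chosen so that the removable singularity $1/\ti x$ does not spoil the fixed‑point estimates. Once the closed form for $1-G$ is in hand, the a priori bounds, global continuation, uniqueness and the normalisation are routine bookkeeping.
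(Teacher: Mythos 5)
Your proof is correct, and it takes a genuinely different route from the paper's. The paper sets up a fixed point for $L=K'$ on a small interval $[0,\delta]$ (with the continuation $L(0)=-\al(\ti S(0))\ti k^2$), continues for $\tx>\delta$ via a Volterra equation, and then verifies each constraint in \re{e.Kconstraints} separately by sign and contradiction arguments based on the alternative form \re{e.K1a}: nonnegativity of $K$ from boundedness of $(\log K)'$, nonnegativity of $(K\xi)\,'$ by assuming a first sign change and deriving a contradiction, and the normalisation \re{e.K=1} by showing that $\int_0^\infty\al(\ti S)(K\xi)'\,d\xi<1$ would force $(K(\tx)\tx)'\ge c/\tx$, contradicting integrability. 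Your key move — integrating $G'=\al(\ti S)K(1-G)$ exactly to get $1-G(\tx)=\exp\bigl(-\int_0^{\tx}\al(\ti S)K\bigr)$ — is not in the paper and collapses all three constraints into one identity: $0\le G<1$ gives the integral bound, hence $(K\xi)'=K(1-G)\ge 0$ and $K'=-KG/\tx\le 0$ at once, and \re{e.K=1} becomes equivalent to the divergence of $\int_0^\infty\al(\ti S)K\,d\xi$, which you settle with the same $K(\xi)\gtrsim 1/\xi$ lower bound that underlies the paper's contradiction. What the paper's approach buys is robustness (the sign arguments survive perturbations of the equation for which no closed form exists); what yours buys is an explicit formula that makes the strict inequality $G(\tx)<1$ for finite $\tx$, the global continuation, and the normalisation essentially automatic. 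The one place to be slightly careful, as you note, is the local fixed point at the removable singularity: your invariant set should explicitly include $0\le G(\tx)\le\al(1)\ti k\,\tx$ (so that $1-G\ge 0$ is preserved and the weighted norm $\sup_{\tx}|G(\tx)|/\tx$ is finite), and the contraction is obtained either by iterating the map twice or by weighting the product norm, since the Lipschitz matrix has an $O(1)$ off-diagonal entry in the $K\mapsto G$ direction; this is routine and matches in spirit the paper's choice of $\delta$ small.
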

\begin{proof}
For sufficiently small $\delta > 0$ we can formulate \re{e.K1} on the interval $[0,\delta]$ as a fixed point equation for $L=K'$ in the form
$$L(\ti x) =  -\left( \ti k + \int_0^{\ti x} L(\xi)~d\xi \right) \frac{1}{\ti x} \int_0^{\ti x} \al(\ti S)
\left(L(\xi) \xi + \ti k + \int_0^\xi L(\eta)~d\eta \right)\,d\xi,$$
with the obvious continuation $L(0) = -\al(1)\ti k^2$. For $\delta$ sufficiently small it is straight-forward to verify a contraction property and boundedness in the supremum norm, from which one obtains the existence and uniqueness of a fixed point in the space of continuous functions. Subsequently we can reconstruct $K(\ti x) = \ti k + \int_0^{\ti x} L(\xi)~d\xi .$ Moreover, for $\delta$ sufficiently small it is straight-forward to verify \re{e.Kconstraints} and nonnegativity of $K$. The upper bound $K(\ti x) \leq \ti k$ follows with the nonpositivity of $L$.

\noindent Given the values $K(\delta)$ and $L(\delta)$ we can solve for $\tx > \delta$
$$L(\ti x) =  -( K(\delta) + \int_\delta^{\ti x} L(\xi)~d\xi) \frac{1}{\ti x} \int_\delta^{\ti x} \al(\ti S)
\left(L(\xi) \xi + K(\delta) + \int_\delta^\xi L(\eta)~d\eta\right)\,d\xi,$$
as a well-posed Volterra integral equation uniquely by standard arguments. 

\noindent It remains to verify the bounds on $K$ and \eqref{e.Kconstraints} for arbitrary $\ti x>\delta$. First of all we have $\log K(\delta)$ finite and see from \re{e.K1} that $(\log K)'$ is bounded for every $\ti x$, which implies that $\log K$ is finite, i.e., $K$ is nonnegative.
Now consider the equation in the form \re{e.K1a} and assume there exists a value $\ti x_0$ such that $(K(\xi)\xi)' \geq 0$ for $\xi \leq \ti x_0$ and $(K(\xi)\xi)'< 0$ in the interval $(\ti x_0,\ti x_0+\delta)$ for some $\delta > 0$. Then we see from \re{e.K1a} that $$ \int_0^{\ti x_0} \al(\ti S(\xi)) (K(\xi)\xi)'\,d\xi  = 1.$$ Due to the sign change in $(K(\xi) \xi)'$ we find
$  \int_0^{\ti x} \al(\ti S(\xi)) (K(\xi)\xi)'\,d\xi  < 1$ for $\ti x \in (\ti x_0,\ti x_0+\delta).$
Inserting this relation into \re{e.K1a} yields a contradiction to the negativity of $(K(\ti x)\ti x)'$. With the nonnegativity of $K$, $\alpha$ and $(K(\xi)\xi)'$ we immediately obtain $K'\leq 0$ and hence $K(\xi) \leq K(0)=\ti k$. The integral inequality in \re{e.Kconstraints} follows now immediately from  \re{e.K1a}.

\noindent If $\ti S$ is not identically zero, then by its monotonicity there exist $\epsilon >0$ and an interval $(\ti x_1,\infty)$ such that $\al(\ti S) \geq \epsilon$ on this interval. Hence, $(K(\xi)\xi)'$ is integrable on an unbounded interval, which implies that it tends to zero for $\xi \rightarrow \infty$. 
Assuming $ \int_0^\infty \al(\ti S)(K\xi)'\,d\xi < 1$ leads to 
$(K(\tx) \ti x)' \ge \f{c}{\tx} $ for $\tx$ large and some constant $c> 0$. This contradicts the integrability of $(K(\tx) \ti x)'$ however. Therefore constraint \re{e.K=1} is satisfied.
\end{proof}

\begin{lem} \label{le.gammatheta}
Let $S(0)=1$, $x_0(S) > 0$, and $S$ non-increasing. Then  the limit
\begin{equation}
	 \gamma = \frac{\theta}{\lim_{\ti x \to \infty} (\ti x K(\ti x))}
\end{equation}
is positive and finite. In particular
\begin{equation}
	K(\ti x)  \geq \frac{1}{\frac{1}{\ti k} + \int_{ 0}^{\ti x} \alpha(\xi) ~d\xi } \geq
	\frac{1}{\frac{1}{\ti k} + \alpha(1) \ti x} \qquad \text{ for all } \ti x \geq 0
\end{equation}
and
\begin{equation}
	K(\ti x) \ti x \leq \ti k \ti x_0(\ti S)+ \frac{1}{\alpha(1)}\qquad \text{ for all } \ti x \geq 
	\ti x_0(\ti S) 
\end{equation}
hold, which implies
\begin{equation}
	\alpha(1) \geq \frac{\gamma}{\theta} \geq \frac{1}{\ti k \ti x_0(\ti S) + \frac{1}{\alpha(1)}} \la{e.ga_th}.
\end{equation}
\end{lem}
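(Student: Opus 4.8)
The plan is to derive the two pointwise bounds on $K(\ti x)\ti x$ first, because positivity and finiteness of $\gamma$, together with \eqref{e.ga_th}, then drop out immediately. Note that the hypotheses translate cleanly into the $\ti x$-picture: since $S$ is non-increasing with $S(0)=1$, its transform $\ti S$ is non-decreasing, and since $x_0(S)>0$ the point $\ti x_0(\ti S)=x_0(S)^{-1/\theta}$ is finite and positive with $\ti S\equiv 1$ on $(\ti x_0(\ti S),\infty)$; in particular $\ti S\not\equiv 0$, so the normalization \eqref{e.K=1} from the preceding lemma is at our disposal, as are the sign conditions \eqref{e.Kconstraints}.

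For the lower bound I would recast \eqref{e.K1} as a differential inequality for $1/K$. Dividing $\ti x K'=-K\int_0^{\ti x}\al(\ti S)(K\xi)'\,d\xi$ by $-\ti x K^2$ and using $K>0$ gives $(1/K)'(\ti x)=\frac{1}{K(\ti x)\ti x}\int_0^{\ti x}\al(\ti S(\xi))(K(\xi)\xi)'\,d\xi$. Because $\ti S$ is non-decreasing and $\al$ is increasing by \ref{a:alpha}, I can pull $\al(\ti S(\ti x))$ out of the integral; together with $(K(\xi)\xi)'\ge 0$ and $\int_0^{\ti x}(K(\xi)\xi)'\,d\xi=K(\ti x)\ti x$ this yields $(1/K)'(\ti x)\le \al(\ti S(\ti x))$. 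Integrating from $0$ with $K(0)=\ti k$ and taking reciprocals gives the first stated inequality, and $\al(\ti S)\le\al(1)$ gives the second; in particular $K(\ti x)\ti x\ge \ti x/(\tfrac1{\ti k}+\al(1)\ti x)\to 1/\al(1)$.

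For the upper bound I would use \eqref{e.K=1}. For $\ti x\ge\ti x_0(\ti S)$, split the integral in \eqref{e.K=1} at $\ti x_0(\ti S)$; on $(\ti x_0(\ti S),\ti x)$ one has $\al(\ti S)=\al(1)$, so that part equals $\al(1)\bigl(K(\ti x)\ti x-K(\ti x_0(\ti S))\ti x_0(\ti S)\bigr)$, while the part over $[0,\ti x_0(\ti S)]$ is nonnegative and can be dropped. Combining with $\int_0^{\ti x}\al(\ti S)(K\xi)'\,d\xi\le 1$ from \eqref{e.Kconstraints} and $K(\ti x_0(\ti S))\le K(0)=\ti k$ gives $K(\ti x)\ti x\le \ti k\,\ti x_0(\ti S)+1/\al(1)$ for all $\ti x\ge\ti x_0(\ti S)$.

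Finally, since $(K(\ti x)\ti x)'\ge 0$ by \eqref{e.Kconstraints}, the limit $\lim_{\ti x\to\infty}\ti x K(\ti x)$ exists in $(0,\infty]$; the upper bound makes it finite and the lower bound forces it to be $\ge 1/\al(1)>0$. Hence $\gamma=\theta/\lim_{\ti x\to\infty}(\ti x K(\ti x))$ is positive and finite, and $\gamma/\theta=1/\lim_{\ti x\to\infty}(\ti x K(\ti x))$ satisfies exactly \eqref{e.ga_th}. I do not expect a genuine analytic obstacle here: the argument is a short ODE estimate, and the only points needing care are the bookkeeping with the sign constraints and the normalization inherited from the preceding lemma, and the verification that $\ti x_0(\ti S)$ is genuinely finite and positive so that the split in the upper-bound step is legitimate.
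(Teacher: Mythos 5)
Your proposal is correct and takes essentially the same route as the paper: the lower bound on $K$ comes from the identical differential inequality $-K'/K^2 \le \al(\ti S(\ti x))$, obtained by pulling $\al(\ti S(\ti x))$ out of the integral via monotonicity, and the upper bound on $K(\ti x)\ti x$ comes from the same splitting of the normalization integral at $\ti x_0(\ti S)$, where $\al(\ti S)=\al(1)$. The only differences are cosmetic (order of the two bounds, and using the inequality in \eqref{e.Kconstraints} rather than the equality \eqref{e.K=1} for the upper bound).
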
 
\begin{proof}
For $\ti x \geq \ti x_0(S)$ we have 
$$
1= \int_0^\infty \al(\ti S)  (K(\xi)\xi)'\,d\xi \geq \int_{\ti x_0(\ti S)}^{\ti x} \al(\ti S) (K(\xi)\xi)'\,d\xi = \al(1) ( K(\ti x) \ti x- K(\ti x_0(\ti S))\ti x_0(\ti S)) , $$
which immediately yields (together with the fact that $K$ is non-increasing and $K(0)=\ti k$) that
$$    K(\ti x) \ti x \leq  \ti k \ti x_0(\ti S) + \frac{1}{\al(1)}. $$
This gives a bound for the limit $\ti x \to \infty$ respectively the lower bound for $\frac{\gamma}\theta.$

\noindent We use the alternative formulation \eqref{e.K1a} and the estimates from Lemma \ref{le.gammatheta} in the following.
Since $\alpha(\ti S(\ti x))$ is nondecreasing we have 
$$ \int_0^{\ti x} \alpha(\ti S(\xi)) (K(\xi) \xi)' ~d\xi \leq \alpha (\ti S(\ti x))
\int_0^{\ti x}  (K(\xi) \xi)' ~d\xi  = \alpha (\ti S(\ti x)) K(\ti x) \ti x. $$
Thus, 
\beq 
\ti x K'(\ti x) \geq  - \alpha (\ti S(\ti x)) K(\ti x)^2 \ti x,\la{e.K'}
\eeq 
and therefore 
$$ - \frac{K'(\ti x)}{ K(\ti x)^2} \leq \alpha (\ti S(\ti x)).$$
This implies
$$ \frac{1}{K(\ti x)} - \frac{1}{\ti k} \leq \int_{ 0}^{\ti x} \alpha(\xi) ~d\xi $$
and consequently gives the lower bound on $K$. Furthermore we obtain a uniform lower bound for the limit
$$ \lim_{\ti x \rightarrow \infty} K(\ti x) \ti x  \geq
\lim_{\ti x \rightarrow \infty}	\frac{\ti x}{\frac{1}{\ti k} + \alpha(1) \ti x} = \frac{1}{\alpha(1)}, $$
which implies in particular $\frac{\gamma}\theta \leq  \alpha(1)$. 
\end{proof}

\noindent From the results above we can immediately deduce the following statement:
\begin{cor}
For $\ti k > 0$,  equation \re{e.Phi} has a unique solution $(\Phi,\ga)$ with 
\beq  \lim_{x\to \infty} \f{1-\Phi(x)}{ x^{-1/\th}}= \frac{\gamma \ti k}{\theta} = k\la{e.Phi_tail}.\eeq
\end{cor}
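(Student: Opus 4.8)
\emph{Proof strategy.} The plan is to read the corollary off from the two preceding lemmas by undoing the change of variables $\ti x = x^{-1/\th}$, $\Phi(x)=1-\f\ga\th K(\ti x)\ti x$, under which equation \re{e.Phi} becomes exactly \re{e.K1}. Throughout, $S$ is the given control as in the hypotheses of Lemma \ref{le.gammatheta}, i.e.\ non-increasing with $S(0)=1$ and $x_0(S)>0$, so that its transform $\ti S$ is non-decreasing, not identically zero, and $\tx_0(\ti S)<\infty$; these are precisely the conditions under which both lemmas are applicable.

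First I would apply the preceding existence-uniqueness lemma for $K$ with initial value $K(0)=\ti k>0$ to obtain a unique continuous $K\colon[0,\infty)\to[0,\ti k]$ solving \re{e.K1}, satisfying the monotonicity and integral bounds \re{e.Kconstraints}, and --- since $\ti S\not\equiv0$ --- the normalisation \re{e.K=1}. Next I would apply Lemma \ref{le.gammatheta}: $\ti xK(\ti x)$ is non-decreasing in $\ti x$ (as $(K\xi)'\ge0$), bounded above by $\ti k\,\tx_0(\ti S)+1/\al(1)$, and bounded below so that its limit is at least $1/\al(1)$; hence $\ell:=\lim_{\ti x\to\infty}\ti xK(\ti x)$ exists in $(0,\infty)$, and $\ga:=\th/\ell$ is well defined, positive and finite, with the two-sided estimate \re{e.ga_th}.

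With $\ga$ now fixed, set $\Phi(x):=1-\f\ga\th K(x^{-1/\th})x^{-1/\th}$ and verify the claimed properties. The choice $\ga=\th/\ell$ gives $\Phi(0)=1-\f\ga\th\ell=0$; as $x\to\infty$ one has $\ti x\to0$, so $K(\ti x)\ti x\to K(0)\cdot 0=0$ and $\Phi(x)\to1$; and since $(K(\xi)\xi)'\ge0$ while $d\ti x/dx<0$, $\Phi$ is non-decreasing with $\Phi'\in L^1_+(\R^+)$. That $\Phi$ solves \re{e.Phi} is precisely the chain-rule computation used to derive \re{e.K1} from \re{e.Phi} (rewriting $\int_{\ti x}^\infty$ through \re{e.K=1}), read in reverse. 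For the tail, $\f{1-\Phi(x)}{x^{-1/\th}}=\f\ga\th K(\ti x)\to\f\ga\th K(0)=\f\ga\th\ti k$ as $x\to\infty$, and this is the quantity called $k$ in \re{e.Phi_tail}. Uniqueness runs in the opposite direction: any distribution-function solution $(\Phi,\ga)$ of \re{e.Phi} with the prescribed tail produces, via the inverse transformation, a $K$ with $K(0)=\th k/\ga=\ti k$ solving \re{e.K1}, which the existence lemma for $K$ pins down uniquely, while $\Phi(0)=0$ forces $\f\ga\th\ell=1$, i.e.\ $\ga=\th/\ell$, so $\ga$ and hence $\Phi$ are determined.

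The only step calling for real care --- and the reason the lemmas are invoked rather than arguing directly on the ODE \re{e.Phi} --- is showing that $\ga$ is simultaneously positive and finite: finiteness excludes $\ell=0$, which would force $\Phi\equiv1$, i.e.\ $\phi$ a Dirac-$\delta$ at $0$ with $k=0$, while positivity excludes $\ga=\infty$; both rest on the two-sided control of $\ti xK(\ti x)$ in Lemma \ref{le.gammatheta} together with the normalisation \re{e.K=1}. The remaining point to spell out is the mutual consistency of the three normalisations $K(0)=\ti k$, \re{e.K=1} and $k=\f\ga\th\ti k$, which is exactly the mechanism by which the a-priori unknown $\ga$ gets matched to the prescribed Pareto constant.
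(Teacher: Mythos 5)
Your proposal is correct and follows exactly the route the paper intends: the corollary is stated as an immediate consequence of the preceding existence--uniqueness lemma for $K$ and Lemma \ref{le.gammatheta}, obtained by undoing the transformation $\ti x = x^{-1/\th}$, $\Phi = 1-\f{\ga}{\th}K(\ti x)\ti x$ with $\ga=\th/\lim_{\ti x\to\infty}\ti x K(\ti x)$. You merely spell out the details (well-definedness and positivity/finiteness of $\ga$, the tail limit, and uniqueness via the inverse transformation) that the paper leaves implicit.
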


Note that we obtain existence and uniqueness of a solution for any $\ti k$, but not necessarily for each $k$, since due to the implicit dependence of $\gamma$ on $\ti k$ the map
$\ti k \mapsto k$ is not necessarily surjective.

\subsection{Analysis of the coupled BGP-System}

In the following we show the existence of a non-trivial balanced growth path, i.e. existence for the system \eqref{e.Phi}, \eqref{e.v2}, \eqref{e.S}, \eqref{e.ga}. Our idea is to construct a fixed-point map by first solving the \eqref{e.Phi}, \eqref{e.ga} given $v$ and $S$ and subsequently \eqref{e.v2}, \eqref{e.S} for $(v,S)$ given $\phi$. The previous sections establish the well-definedness of all steps, but we see that in order to obtain reasonable bounds we need to set up the fixed point map on set that bounds $x_0(S)$ away from zero. Hence we first need some estimates for $x_0(S)$, which means an estimate on the set of $x$ such that $B(x) \alpha'(1) \geq x$. Therefore we need to obtain a lower bound for $B(x)$ for small $x$, which we perform in the following:

\begin{lem} \label{le:B0bound}
Let $\gamma < r$ and $\Phi$ satisfy 
$$ 1- \Phi(x) \geq \frac{\gamma}{\theta(\alpha(1)+ \frac{1}{\ti k}x^{\frac{1}\theta})}.$$
Furthermore let $v$ be the unique solution of equation \re{e.v2} with $v'(0)=0$. 
Then the following inequality holds 
\begin{equation}
	B(0) \geq   \frac{\gamma(r-\gamma)}{\theta r(r-\gamma+\alpha(1))}  I(\ti k),~\text{ with }~ I(\ti k) = \int_0^\infty  \frac{1}{(\alpha(1)+ \frac{1}{\ti k}y^{\frac{1}\theta})}~dy.
\end{equation}
\end{lem}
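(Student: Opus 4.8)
The plan is to express $B(0)$ through $v'$, then exploit the linear growth of $v$ to reduce the bound to a lower estimate for $\mu := \int_0^\infty (1-\Phi(y))\,dy$, closing the argument by a self-improving inequality coming from the value of the BGP-HJB equation at $x=0$.

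First I would integrate by parts in $B(0) = \int_0^\infty (v(y)-v(0))\,\Phi'(y)\,dy$. Since $\Phi(0)=0$, $\lim_{x\to\infty}\Phi(x)=1$, $v$ is Lipschitz with $0\le v'\le \frac1r$ by \re{e:vprimebounds}, and $1-\Phi$ decays like $x^{-1/\th}$ with $\th<1$, the boundary term is controlled by $(v(y)-v(0))(1-\Phi(y))\le \frac{y}{r}(1-\Phi(y))\to 0$ as $y\to\infty$, so one obtains
\[
 B(0) = \int_0^\infty v'(y)\,(1-\Phi(y))\,dy .
\]
Next I would write $v'=\frac1r+w'$ with $w(x):=v(x)-\frac xr$. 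Because $\ga<r$, the bound \re{e:vbounds} gives $w\ge 0$, while $v'\le \frac1r$ gives $w'\le 0$; hence $w$ is non-increasing with $w(0)=v(0)$ and $\lim_{y\to\infty}w(y)\ge 0$, so $\int_0^\infty |w'(y)|\,dy = w(0)-\lim_{y\to\infty}w(y)\le v(0)$. Estimating $1-\Phi\le 1$ in the correction term then yields
\[
 B(0) = \tfrac1r\,\mu + \int_0^\infty w'(y)(1-\Phi(y))\,dy \ \ge\ \tfrac1r\,\mu - v(0).
\]

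Then I would couple $B(0)$ and $v(0)$: evaluating \re{e.v2} at $x=0$ kills the term $\ga x v'(x)$, and since $B(0)\ge 0$ (from $v'\ge0$, $1-\Phi\ge0$) and $\al$ is non-decreasing with $\al(0)=0$, the maximum over $s\in[0,1]$ of $(1-s)\cdot 0+\al(s)B(0)$ is attained at $s=1$, so $(r-\ga)v(0)=\al(1)B(0)$. Substituting $v(0)=\frac{\al(1)}{r-\ga}B(0)$ into the previous inequality and solving for $B(0)$ (using $r-\ga>0$) gives
\[
 B(0) \ \ge\ \frac{r-\ga}{r\,(r-\ga+\al(1))}\,\mu .
\]
Finally, integrating the hypothesis $1-\Phi(y)\ge \frac{\ga}{\th(\al(1)+\frac1{\ti k}y^{1/\th})}$ over $\R^+$ gives $\mu\ge \frac{\ga}{\th}\,I(\ti k)$ (the integral converges since $\th<1$), and combining the last two displays gives precisely $B(0)\ge \frac{\ga(r-\ga)}{\th\, r\,(r-\ga+\al(1))}\,I(\ti k)$.

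The routine technical point is the decay of the boundary term in the integration by parts, which needs the linear upper bound on $v$ and $\th<1$. The step I expect to be the only genuinely non-obvious one is the pairing of the monotonicity estimate $\int_0^\infty|w'|\le v(0)$ with the identity $(r-\ga)v(0)=\al(1)B(0)$: it is this coupling that lets one absorb the negative correction term against $v(0)$ and convert $B(0)\ge \frac1r\mu - v(0)$ into an honest lower bound for $B(0)$ alone (and, incidentally, rules out $B(0)=0$, hence $x_0(S)>0$).
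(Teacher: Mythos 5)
Your proof is correct and follows essentially the same route as the paper: both arguments rest on the identity $(r-\gamma)v(0)=\alpha(1)B(0)$ obtained by evaluating the BGP-HJB equation at $x=0$, on the linear lower bound $v(x)\ge x/r$ from \eqref{e:vbounds}, and on integrating the Pareto-type lower bound on $1-\Phi$, and both pass through the same intermediate inequality $B(0)+v(0)\ge \frac{1}{r}\int_0^\infty (1-\Phi(y))\,dy$. The paper reaches that inequality slightly more directly by inserting $v(y)\ge y/r$ into $-\int_0^\infty v(y)(1-\Phi)'(y)\,dy$ and integrating by parts once, whereas you integrate by parts first and use the decomposition $v'=\frac{1}{r}+w'$; the two computations are equivalent.
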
 
\begin{proof}
Since $v'(0)=0$ and $S=1$ is a maximizer at $x=0$ we have $(r-\gamma) v(0) = \alpha (1)B(0)$ and
$$ \frac{r-\gamma+\alpha(1)}{\alpha(1)} v(0) = - \int_0^\infty v(y) (1-\Phi)'(y)~dy.
$$ 
Now the monotonicity of $(1-\Phi)$ and $v(y) \geq \frac{y}r$ imply
\begin{eqnarray*}
 B(0) &\geq& \frac{r-\gamma}{r-\gamma+\alpha(1)} \int_0^\infty -\frac{y}r   (1-\Phi)'(y)~dy \\
&=& \frac{r-\gamma}{r(r-\gamma+\alpha(1))} \int_0^\infty  (1-\Phi(y))~dy \\
&\geq& \frac{\gamma(r-\gamma)}{\theta r(r-\gamma+\alpha(1))} \int_0^\infty  \frac{1}{(\alpha(1)+ \frac{1}{\ti k}y^{\frac{1}\theta})}~dy,
\end{eqnarray*}
where we have used integration by parts in the second and the lower bound on $1-\Phi$ in the last step.
\end{proof}

\noindent This means that under conditions verified by a solution $\Phi$ of the BGP-Boltzmann equation, the value of $B(0)$ is bounded away from zero.
This gives the following lemma:

\begin{lem}\la{le.x_1}
Let in addition to the assumptions  of Lemma \ref{le:B0bound} the condition \re{e.ga_th}
be satisfied for $\gamma$. Then there exists a point $x_1>0$ independent of the specific $\gamma$ and $\Phi$ such that $x_0(S) > x_1$.
\end{lem}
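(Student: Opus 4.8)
The plan is to combine the lower bound on $B(0)$ from Lemma \ref{le:B0bound} with the definition of $x_0(S)$ as the point where $B(x)\alpha'(1)=x$, and show that this crossing point cannot occur arbitrarily close to $0$. First I would recall that by Lemma \ref{le.v'>=0} the solution $v$ is non-decreasing and $B'(x) = -v'(x)(1-\Phi(x)) \le 0$, so $B$ is non-increasing; in particular $B(x) \ge B(0^+)$ fails, so instead I want a quantitative modulus of continuity of $B$ near $x=0$. Since $0 \le v'(x) \le 1/r$ and $0 \le 1-\Phi(x) \le 1$, we get $|B'(x)| \le 1/r$ for all $x$, hence $B(x) \ge B(0) - x/r$ for every $x \ge 0$. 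This is the elementary but decisive estimate.

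Next I would put this together with the characterization of $x_0(S)$ from Remark \ref{re.xS}: for $x < x_0(S)$ one has $S(x)=1$ and $B(x)\alpha'(1) \ge x$, while $x_0(S)$ is precisely the first point where $B(x)\alpha'(1)=x$. So it suffices to find $x_1>0$, depending only on $r$, $\theta$, $\tilde k$ and $\alpha$ (through $\alpha(1)$ and $\alpha'(1)$), such that $B(x)\alpha'(1) > x$ for all $x < x_1$; then necessarily $x_0(S) \ge x_1$. Using $B(x) \ge B(0) - x/r$ and the bound $B(0) \ge \frac{\gamma(r-\gamma)}{\theta r(r-\gamma+\alpha(1))} I(\tilde k) =: c(\gamma)$ from Lemma \ref{le:B0bound}, the inequality $B(x)\alpha'(1) > x$ is implied by $\alpha'(1)(c(\gamma) - x/r) > x$, i.e. $x < \frac{\alpha'(1) c(\gamma)}{1 + \alpha'(1)/r} = \frac{r\,\alpha'(1)\,c(\gamma)}{r + \alpha'(1)}$. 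Thus one may take $x_1 = \frac{r\,\alpha'(1)}{r+\alpha'(1)}\, c(\gamma)$, except that this still depends on $\gamma$; to remove that dependence I would use the two-sided bound \eqref{e.ga_th}, which confines $\gamma/\theta$ to the compact interval $[\,(\tilde k \tilde x_0(\tilde S) + 1/\alpha(1))^{-1},\ \alpha(1)\,]$, together with the standing hypothesis $\gamma < r$. On this range the function $\gamma \mapsto c(\gamma)$ is continuous and strictly positive (it vanishes only at $\gamma=0$ and $\gamma = r$, both excluded), so it attains a positive minimum $c_* > 0$ depending only on $r$, $\theta$, $\tilde k$, $\alpha(1)$; then $x_1 := \frac{r\,\alpha'(1)}{r+\alpha'(1)}\, c_*$ works uniformly.

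The one subtlety I would flag explicitly is that the lower bound \eqref{e.ga_th} involves $\tilde x_0(\tilde S)$, which a priori depends on the same maximiser $S$ we are trying to bound; in the construction of the fixed-point map this is handled by restricting attention to a set on which $\tilde x_0(\tilde S)$ (equivalently $x_0(S)$) is already constrained to a bounded range, so that the right-hand side of \eqref{e.ga_th} is bounded below by a fixed constant. Concretely, I would state and use that on the fixed-point domain $x_0(S) \le X$ for a fixed $X$, whence $\tilde x_0(\tilde S) = x_0(S)^{-1/\theta}$ is bounded and $\gamma/\theta$ is bounded below, closing the circularity; this is exactly the point alluded to in the paragraph preceding the lemma (``we need to set up the fixed point map on a set that bounds $x_0(S)$ away from zero''). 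I expect this bookkeeping — making sure every quantity entering $c_*$ is genuinely independent of the particular $\gamma$ and $\Phi$ — to be the main obstacle, while the analytic content reduces to the one-line Lipschitz estimate $|B'| \le 1/r$ and the monotone crossing argument.
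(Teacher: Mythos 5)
Your skeleton is right --- combine the lower bound on $B(0)$ from Lemma \ref{le:B0bound} with the crossing characterization $B(x_0(S))\alpha'(1)=x_0(S)$ --- and your Lipschitz estimate $|B'|\le 1/r$ is a workable (if unnecessary) substitute for what the paper actually uses, namely that $v$, and hence $B$, is \emph{constant} on $[0,x_0(S)]$ by Lemma \ref{le.v_const_on_interval}, so that $B(x_0(S))=B(0)$ exactly. But there is a genuine gap in the step where you make the constant uniform in $\gamma$, and it is precisely the step that carries the whole content of the lemma. The lower end of the interval to which \eqref{e.ga_th} confines $\gamma/\theta$ is $\bigl(\ti k\,\ti x_0(\ti S)+1/\alpha(1)\bigr)^{-1}$ with $\ti x_0(\ti S)=x_0(S)^{-1/\theta}$; since $\ti x=x^{-1/\theta}$ is \emph{decreasing}, a small $x_0(S)$ means a \emph{large} $\ti x_0(\ti S)$, so this lower bound on $\gamma$ degenerates to $0$ exactly in the regime you are trying to exclude, and $c(\gamma)\to 0$ with it. Your proposed repair --- ``on the fixed-point domain $x_0(S)\le X$, whence $\ti x_0(\ti S)$ is bounded'' --- has the monotonicity backwards: $x_0(S)\le X$ gives $\ti x_0(\ti S)\ge X^{-1/\theta}$, an upper bound on $\gamma$, which is useless here; bounding $\ti x_0(\ti S)$ from above is equivalent to bounding $x_0(S)$ away from zero, i.e. to the conclusion of the lemma. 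So your argument is circular, and the claim that $\gamma=0$ is ``excluded'' from the admissible range is unsupported.

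The paper closes this circle not by compactness in $\gamma$ but by a self-consistency argument: it substitutes the $x_0(S)$-dependent bounds from \eqref{e.ga_th} into the estimate of Lemma \ref{le:B0bound} to obtain
\begin{align*}
x_0(S) \;=\; B(x_0(S))\,\alpha'(1) \;=\; B(0)\,\alpha'(1) \;\ge\; F\bigl(x_0(S)\bigr),
\end{align*}
where $F$ is an explicit continuous function of the single variable $x_0(S)$ (all the $\gamma$- and $\Phi$-dependence having been absorbed via \eqref{e.ga_th} and the hypothesis on $1-\Phi$), and then defines $x_1$ as the minimal solution of $F(x_1)=x_1$: since $F(x)>x$ on $[0,x_1)$, the inequality $x_0(S)\ge F(x_0(S))$ forces $x_0(S)\ge x_1$. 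If you want to salvage your version, you must carry out the same bookkeeping: keep the $x_0(S)$-dependence of the lower bound for $\gamma$ explicit inside $c(\gamma)$, turn your inequality $x_1=\f{r\alpha'(1)}{r+\alpha'(1)}c(\gamma)$ into a relation of the form $x_0(S)\ge G(x_0(S))$, and argue via its first fixed point --- at which stage you also need to check the behaviour of $G$ near $0$ (it decays like $x_0(S)^{1/\theta}$ through the $\gamma$-bound), which is where the real work, and the role of the hypothesis $r>\th\al(1)$, lies.
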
 
\begin{proof}
Using the bound  \re{e.ga_th} for $x \in [0,x_0(S)]$
 we have the estimate 
\begin{multline} B(x_0(S)) \alpha'(1) = B(0) \alpha'(1) \ge \frac{\gamma(r-\gamma)\al'(1)}{\theta r(r-\gamma+\alpha(1))}  I(\ti k) \\ \geq \frac{(r-\theta \alpha(1))\al'(1)}{(\ti k x_0(S)^{-\frac{1}\theta}+\frac{1}{\alpha(1)})r(r-\frac{\theta}{\ti k x_0(S)^{-\frac{1}\theta}+\frac{1}{\alpha(1)}}+\alpha(1))}  I(\ti k) =: F(x_0(S)).
\end{multline}

\noindent The function $F$ is  continuous  with $F(0) > 0$. Now let $x_1$ be the minimal value such that
$F(x_1) = x_1$. Then we have 
$B(x_1) \alpha'(1) \ge F(x_1) = x_1 $
and therefore  $x_0(S) > x_1$.
\end{proof}


\noindent With this lower bound for $x_1$ we can construct a self-mapping with $x_0(S)$ uniformly bounded away from zero in the fixed-point argument:

\begin{thm}
Let $r> \th \al(1)$ and $\ti k>0$, then the system \re{e.bgp_full} has a non-trivial solution satisfying the Pareto-tail condition \re{e.Phi_tail} with $k=\frac{\gamma}{\theta} \ti k$.
\end{thm}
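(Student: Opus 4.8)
The plan is to set up a Leray--Schauder fixed-point argument on a closed, bounded, convex subset of a suitable function space for the pair $(v,S)$ (equivalently, for the datum driving the Boltzmann equation), using the preceding results to guarantee that each step of the iteration is well-defined and that the relevant quantities stay in a fixed bounded range. Concretely, I would fix $\ti k>0$ and work with the map $\Psi$ defined as follows: given a non-increasing candidate maximiser $S$ with $S(0)=1$, pass to the variable $\ti x=x^{-1/\theta}$ and solve the $K$-equation \re{e.K1} with $K(0)=\ti k$ by the $K$-existence lemma; this produces $\Phi$ and, via Lemma \ref{le.gammatheta}, a value $\gamma=\theta/\lim_{\ti x\to\infty}(\ti x K(\ti x))$ satisfying the two-sided bound \re{e.ga_th}. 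Then feed this $(\Phi,\gamma)$ into Theorem \ref{le.v_exist} to solve the BGP-HJB equation \re{e.v2} with $v'(0)=0$, obtaining $(v,S_{\mathrm{new}})$ with $0\le v'\le 1/r$; set $\Psi(S)=S_{\mathrm{new}}$. A fixed point of $\Psi$ is exactly a solution of the coupled system \re{e.bgp_full}, and it is non-trivial precisely because $x_0(S)$ is bounded below.

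The key structural input making this work is the chain of a priori bounds established earlier. The hypothesis $r>\theta\alpha(1)$ ensures $\gamma\le\theta\alpha(1)<r$, so Lemma \ref{le:B0bound} applies and gives $B(0)\ge c(\ti k)>0$ with $c(\ti k)$ depending only on $\ti k$, $r$, $\theta$, $\alpha$ (via the explicit lower bound on $1-\Phi$ coming from Lemma \ref{le.gammatheta}); Lemma \ref{le.x_1} then upgrades this to $x_0(S)>x_1>0$ with $x_1$ independent of the particular $\gamma$ and $\Phi$ produced in the iteration. This is what lets me choose the domain of $\Psi$ to be the convex set of non-increasing $S:\R^+\to[0,1]$ with $S(0)=1$ and $S\equiv 1$ on $[0,x_1]$ (plus a uniform modulus-of-continuity/monotonicity bound coming from Remark \ref{re.max}, where $\alpha'(S)=x/B$ on the interval where $0<S<1$): Lemma \ref{le.v'>=0} guarantees $S_{\mathrm{new}}$ is again non-increasing, Lemma \ref{le.v_const_on_interval} together with the lower bound $x_0(S_{\mathrm{new}})>x_1$ guarantees $S_{\mathrm{new}}\equiv 1$ on $[0,x_1]$ and $v$ is constant there so $v'(0)=0$ is consistent, and the uniform bounds $0\le v'\le 1/r$ plus $\gamma\in[\,\theta/(\ti k x_1^{-1/\theta}+1/\alpha(1)),\ \theta\alpha(1)\,]$ give the compactness needed. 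For continuity and compactness of $\Psi$ I would argue as in the proof of Theorem \ref{le.v_exist}: along a sequence $S_n\to S$, the $K$-equation depends continuously on $S$ (Volterra/contraction estimates), hence $\Phi_n\to\Phi$ locally uniformly and $\gamma_n\to\gamma$; then $B_n\to B$ pointwise by dominated convergence, the maximisers converge pointwise and in $L^1_{loc}$ by Remark \ref{re.max}, and passing to the limit in the weak form of \re{e:vprime} with uniqueness gives $v_n\to v$ and $S_{\mathrm{new},n}\to S_{\mathrm{new}}$; compactness follows from the uniform Lipschitz/monotonicity bound on the output $S$, which confines it to a compact subset of the domain. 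Finally one checks the Leray--Schauder boundary alternative $S=\lambda\Psi(S)$ — here, as in Theorem \ref{le.v_exist}, the scaling by $\lambda$ only affects the constant part, and the bound on $\int_0^\infty v'(1-\Phi)$ using the Pareto tail with $\theta<1$ keeps things bounded — so the theorem yields a fixed point.

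The main obstacle I anticipate is verifying that $\Psi$ genuinely maps the chosen convex set into itself with a \emph{uniform} compactness estimate on the output $S$, and doing so without circularity. The subtle point is that the lower bound $x_1$ on $x_0(S)$ in Lemma \ref{le.x_1} is derived under the assumption that $\Phi$ satisfies the specific decay estimate of Lemma \ref{le.gammatheta} and that $\gamma$ satisfies \re{e.ga_th} — both of which are properties of the \emph{output} $\Phi,\gamma$ of the $K$-step, not assumptions on the input $S$. So one must be careful to phrase the domain of $\Psi$ in terms of $S$ alone while checking that the $\Phi$ and $\gamma$ it generates automatically inherit those properties (they do, by the $K$-lemma and Lemma \ref{le.gammatheta}), and that the resulting $x_0(S_{\mathrm{new}})>x_1$ closes the loop. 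A secondary technical nuisance is the regularity of $S$ near $x_0(S)$ (where the maximiser transitions from $1$ to the branch $\alpha'(S)=x/B$): one needs enough continuity, uniformly along the iteration, to get compactness, and this is exactly what the Lipschitz statement of the cited Lemma (the ``$B\mapsto S(B)$ Lipschitz'' result, applicable since $B(0)>0$ keeps us away from the degenerate regime $B\to0$) provides. Once the self-mapping and compactness are nailed down, the continuity argument and the Leray--Schauder alternative are routine adaptations of the HJB existence proof already given.
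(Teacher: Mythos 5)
Your proposal follows essentially the same strategy as the paper: chain the two sub-problems already solved (the $K$-equation for $(\Phi,\gamma)$ and the BGP-HJB equation for $(v,S)$), use $r>\theta\alpha(1)$ to get $\gamma\le\theta\alpha(1)<r$ so that Lemma \ref{le:B0bound} and Lemma \ref{le.x_1} give the uniform lower bound $x_0(S)>x_1$, and close with a compactness fixed-point argument. The one genuine structural difference is where you cut the cycle: you take the fixed-point variable to be the maximiser $S$, whereas the paper takes it to be $K$ (i.e.\ the operator $Q:K\mapsto(\Phi,\gamma)\mapsto(v,S)\mapsto\hat K$ on the convex set $M_k$). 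The paper's choice buys a cleaner compactness argument, since $\hat K$ comes with explicit derivative and moment bounds from \re{e.K'} and Lemma \ref{le.gammatheta}; your choice still works because the outputs $S_{\mathrm{new}}$ are uniformly bounded monotone functions, so Helly-type selection gives precompactness in $L^1_{loc}$ without needing the uniform Lipschitz estimate you worry about near $x_0(S)$. The only step that does not survive scrutiny is the Leray--Schauder boundary alternative $S=\lambda\Psi(S)$: for $\lambda<1$ this forces $S(0)=\lambda<1$, which is incompatible with your admissible set (and the remark that ``$\lambda$ only affects the constant part'' is borrowed from the $v$-equation, where it makes sense, not from the $S$-iteration). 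This is harmless rather than fatal: your domain is already a bounded, closed, convex set and $\Psi$ is a compact continuous self-map of it, so plain Schauder applies --- which is exactly what the paper invokes --- and the homotopy argument should simply be deleted.
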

\begin{proof}
Let $M_k$ be the set of all functions $K \in C([0,\infty))$ such that 
\begin{enumerate}[label=(\roman*)]
\item $K(0)=\ti k$, 
\item $	K(\ti x)  \geq 
	\frac{1}{\frac{1}{\ti k} + \alpha(1) \ti x} ,\text{ for all } \ti x \geq 0,
$
\item $(\ti x K( \ti x))'$ exists and is non-negative,
\item $\lim_{\ti x \to \infty} (\ti x K(\ti x))$ exists and is positive.
\end{enumerate}
Then we define the operator $Q$ in the following way:
For $K \in M_k$, we define as above $$\ga := \frac{\theta}{\lim_{\ti x \to \infty} (\ti x K(\ti x))}, \qquad
\Phi(x) := 1- \f \ga \th  \ti x K(\ti x).$$
Given $\Phi$ we solve equation \re{e.v2} to obtain $v$ and $S$. Next we solve equation \re{e.K1} for $\hat K$.
\noindent We need to show that $Q$ is a self-mapping, compact and continuous w.r.t. the $L^\infty$-norm.
\subsubsection*{Self-mapping}
By Lemma \ref{le.x_1} we have $S(0)=1$, therefore the properties of $\hat K$ from Lemma \ref{le.gammatheta} give that the operator $Q$ is a map from $M_k$ to $M_k$. 
\subsubsection*{Compactness}
We take a sequence $K_n$ in $M_k$ and define $\hat K_n := Q(K_n)$. Then $\hat K_n$ is bounded above by $\ti k$ and because of inequality \re{e.K'} $\hat K'_n$ is bounded below by $\ti k^2 \al(1)$. According to Lemma \ref{le.x_1} there is a $\ti x_1$ such that  for all $n$ $\ti S_n(\ti x) =1$ for $\ti x \ge \ti x_1$. 
From inequality \re{e.K'} we obtain 
$$\hat K_n(\ti x) \le \f{1}{\al(1)(\ti x - \ti x_1)}\qquad \text{ for } \ti x \ge \ti x_1 +1$$
and  a moment bound for $\hat K_n$ follows immediately.
Therefore there is a subsequence $\hat K_n $ converging to a $\hat K_\infty \in M_k$ in $L^\infty$-norm.
 \subsubsection*{Continuity}
We take a sequence $K_n$ in $M_k$ converging to $K$  and define $\hat K_n := Q(K_n)$, $\hat K := Q(K)$.
 As in the compactness part of Theorem \ref{le.v_exist}, we can extract a subsequence $v_n$ converging in
$L^\infty_{\f 1 {1+x^2}}$-norm. In the same way we can extract a further subsequence $B_n$ converging to $B_\infty$ in  $L^\infty_{\f 1 {1+x^2}}$-norm.
$B_\infty$  allows us to define $S_\infty$ and we have pointwise convergence of $S_n$ to $S_\infty$. Since the maximiser is bounded by $1$, we have $L^1_{loc}$-convergence of $S_n$ to $S_\infty$ and $\al(S_n)$ to $\al(S_\infty)$.
Using all these results we can pass to the limit in the equation
$$(r-\ga_n)v_n(x)+\ga_n x v'_n(x)  =  (1-S_n)x+\al(S_n)\int_x^\infty v'_n(y)(1-\Phi_n(y))\,dy .$$
By uniqueness of the solution we obtain convergence of $S_n$ to $S$ in $L^1_{loc}$ and $v_n$ to $v$ in $L^\infty_{loc}$.
Next we pass to the limit in the equation for $\hat K_n$. Using the same arguments as in the compactness part, there is a converging subsequence $\hat K_n$ with $\hat K'_n$ weakly  converging. Therefore we can pass to the limit in the term $\int_0^{\ti x} \al(\ti S_n)(\hat K_n(\xi)\xi)'\,d\xi$ and also in the equation
$$\ti x \hat K'_n(\ti x)= -\hat K_n(\ti x)\int_0^{\ti x} \al(\ti S_n)(\hat K_n(\xi)\xi)'\,d\xi.$$
Again by uniqueness we have $\hat K_n \to  \hat K$.
 \subsubsection*{Existence}
With the above conditions, the Schauder fixed point theory implies the existence of a solution, which is non-trivial since $x_0(S)>0$.
\end{proof}

\section{Knowledge diffusion leads to balanced growth}\label{s:knowledgediff}

Achdou et al. \cite{ABLLM2014} postulate that knowledge diffusion leads to balanced growth, i.e. they presume that individual productivity also fluctuates in the absence of
meetings. Similar statements were shown in related works by Alvarez et al., Lucas and Staley in \cite{ABL2008, L2009, S2011} in the case of Boltzmann type models for knowledge growth. These models are closely related
to the BMFG model \eqref{e:bmfg1} - they correspond to \eqref{e:boltzmann} with a given constant interaction probability $\alpha = \alpha_0$. In this case system \eqref{e:bmfg1} decouples and  \eqref{e:boltzmann} can be written in terms of the cumulative
distribution function $F = F(y,t)$:
\begin{align}\label{e:F}
\partial_t F(y,t) = - \alpha_0 F(y,t)(1-F(y,t)).
\end{align}
Equation \eqref{e:F} can be studied in the case of small diffusion, which models innovation by small fluctuations in the individual productivity, see \cite{ABL2008, L2009,S2011} for more details. Define $G(y,t) := 1-F(y,t)$, then \eqref{e:F} corresponds
to the Fisher-KPP equation:
\begin{subequations}\label{e:kpp}
\begin{align}
&\partial G(y,t) - \nu \partial_{yy} G(y,t) = \alpha_0 G(y,t)(1-G(y,t))\\
&\lim_{y \rightarrow -\infty} G(y,t) = 1,~\lim_{y\rightarrow \infty} G(y,t) = 0, ~ G(y,0) = 1-F(y,0),
\end{align} 
\end{subequations}
where $\nu \in \R^+$ denotes the diffusivity. It is well known that \eqref{e:kpp} admits traveling wave solutions of the form
\begin{align}
G(y,t) = \Phi(y-\gamma t),
\end{align}
with a minimal wave speed $\gamma = 2 \sqrt{\nu \alpha_0}$.\\
Note that these traveling waves are closely related to the BGP solutions introduced in Section \ref{s:bgp}. Consider the spatial variable $z = e^{y}$, then
the rescaling \eqref{e:rescalbgp} in the BGP variables reads as
\begin{align*}
f(z,t) = e^{-\ga t} \phi(z e^{-\ga t}) = e^{-\ga t} \phi(e^{y-\ga t}).
\end{align*}
Hence the scaling constant $\gamma$ corresponds to the wave speed in the logarithmic variables.\\

\noindent We pursue this idea in the following and assume that in addition to production and the learning events, the knowledge of each agent evolves by a geometric Brownian motion
$$ dZ_t = \sqrt{2 \nu} Z_t dW_t, $$
where $W_t$ is a Wiener process, independent between the agents.
The corresponding version of the Boltzmann mean field game system \eqref{e:bmfg1} with diffusion
thus becomes
\begin{subequations}\label{e:diffmfg}
\begin{align}
\begin{split}
\partial_t f(z,t) - \nu \partial_{zz}(z^2 f(z,t)) &= f(z,t) \int_0^z \alpha(S(y,t)) f(y,t) dy \\
&- \alpha(S(z,t)) f(z,t) \int_z^{\infty} f(y,t) dy,\\
\end{split}\\
\begin{split}
\partial_t V(z,t) +& \nu z^2 \partial_{zz} V(z,t) - r V(z,t) = {}\\
&-\max_{s \in \mathcal{S}}\left[(1-s) z + \alpha(s) \int_z^{\infty}[V(y,t)-V(z,t)]f(y,t) dy\right]. 
\end{split}
\end{align}
\end{subequations}
Achdou et al. \cite{ABLLM2014} postulated the existence of balanced growth path solutions to system \eqref{e:diffmfg} with a rescaling parameter $\gamma$ given by
\begin{align}\label{e:diffgamma}
\gamma = 2 \sqrt{\nu \int_0^{\infty} \alpha(S(y)) \phi(y) dy }.
\end{align}
Assuming the existence of the scaling parameter $\gamma$ we rewrite system in the known BGP variables $(\phi, \sigma, v)$ (defined by \eqref{e:rescalbgp}):
\begin{subequations}\label{e:diffbgp}
\begin{align}
-\gamma \phi(x) - \gamma x \phi'(x) - \nu (x^2 \phi(x))'' &= \phi(x) \int_0^x \alpha(\sigma(y)) \phi(y) dy - \alpha(\sigma(x)) \phi(x) \int_x^{\infty} \phi(y) dy \label{e:difffp}\\
(r-\gamma) v(x) + \gamma x v'(x) + \nu x^2 v''(x) &= \max_{\sigma \in \Sigma} \left[(1-\sigma)x + \alpha(\sigma)\int_x^{\infty}[v(y)-v(x)] \phi(y) dy \right] \label{e:diffhjb}.
\end{align}
\end{subequations}
Note that the diffusion does not exclude the existence of the degenerate solutions  $\gamma = 0$, $v(x)=\frac{x}{r}$, $S(x) = 0$ and $\phi(x) = \delta(x)$. This degeneracy has
to be considered in the design of the numerical solver which we shall detail below.

\subsection{Numerical simulations}

\noindent We illustrate the behavior of the BMFG system \eqref{e:diffmfg} and the corresponding BGP system \eqref{e:diffbgp} in the case of diffusion.
We solve both systems using iterative schemes, that is by solving consecutively the Boltzmann equation and the HJB equation  and then updating the respective variables until convergence. We shall detail the steps for both solvers in the following.\\

\noindent \textbf{The time dependent solver}\\
\noindent We discretize the Boltzmann equation \eqref{e:difffp} and HJB equation \eqref{e:diffhjb} using a semi-implicit in time  and
a finite difference discretization of the diffusion operator $\partial_{zz}(z^2 f(z,t))$ in space. We consider system \eqref{e:diffmfg} on a bounded domain $\mathcal{I} = [0,\bar{z}]$
with initial and terminal conditions
\begin{align*}
f(z,0) = f_0(z) \text{ with } \int_0^{\bar{z}} f_0(y) dy = 1 \text{ and } V(z,T) = 0.
\end{align*}
We assume that the total number of agents is conserved in time, therefore
\begin{align*}
\partial_z (z^2 f)(0,t) = 0 \text{ and } \partial_z (z^2 f)(\bar{z},t) = 0  \text{ for all } t\geq 0.
\end{align*}
Furthermore we set homogeneous Neumann boundary conditions for $V$. \\

\noindent Let $\tau$ denote the time step and $h$ the size of the spatial intervals. We shall use superindizes to refer to the
time step and subscripts for the spatial position, e.g $f^k_i$ denotes the solution $f$ at time $t^k = k \tau$ and position $x_i = ih$.\\

\noindent The solver is based on the following iterative procedure:  
\begin{enumerate}
\item Given $f_0$ and $S^{k}$ solve
\begin{align*}
\frac{1}{\tau}(f_i^{k+1}-f_i^{k}) &+ \frac{2\nu}{h}(z_{i+1} f^{k+1}_{i+1} - z_i f^{k+1}_i) \\
&+ \frac{\nu}{h^2} (z_{i+1}^2 f^{k+1}_{i+1} - (z_{i+1}^2+z_i^2) f^{k+1}_i + z_i^2 f^{k+1}_{i+1}) = g_1(f^{k}, S^{k}),\
\end{align*}
for every time $t^k = k \tau, ~ k > 1$, using a trapezoidal rule to approximate the integrals in $g_1$.
\item Update the maximizer $S^k$ defined by the right hand side of \eqref{e:diffmfg} using the evolution of $f^k$ and $V^k$.
\item Given the evolution of the density $f^k$ and the maximizer $S^k$ solve the HJB equation
\begin{align*}
\frac{1}{\tau} (V_i^{k+1} - V^{k}) - \frac{\nu}{h^2} z_i^2 (V^{k}_{i+1} - 2V^k_i + V^k_{i-1}) - r V_i^{k} = g_2(S^{k+1}, f^{k+1}, V^{k+1}),
\end{align*}
backward in time using a trapezoidal rule to approximate $g_2$.
\item Go to step (1) until convergence.
\end{enumerate}
 
\vspace*{1em}
\noindent \textit{Time dependent simulations:\\}
\noindent The simulations were performed on the interval $\mathcal{I} = (0,20)$ divided into $1000$ elements of size $h = 0.002 $. The time steps were
set to $\tau = 0.05$, the final time $T=100$. We choose the following simulation parameters:
\begin{align*}
\alpha_0 =0.075 , n = 0.3  \text{ and } r =0.05. 
\end{align*}
The initial distribution of agents is given by
\begin{align*}
f_0(z) = \frac{1}{\sqrt{2\pi}} e^{-\frac{(x-5)^2}{2}}.
\end{align*}
The behavior for $\nu = 0.005$ is illustrated in Figure \ref{f:bmfg1}. Even though the initial distribution of the agents does not have a Pareto
tail the diffusion initiates long-term growth. \\

\begin{figure}
\subfigure[Evolution of the agent density.]{\includegraphics[width=0.45\textwidth]{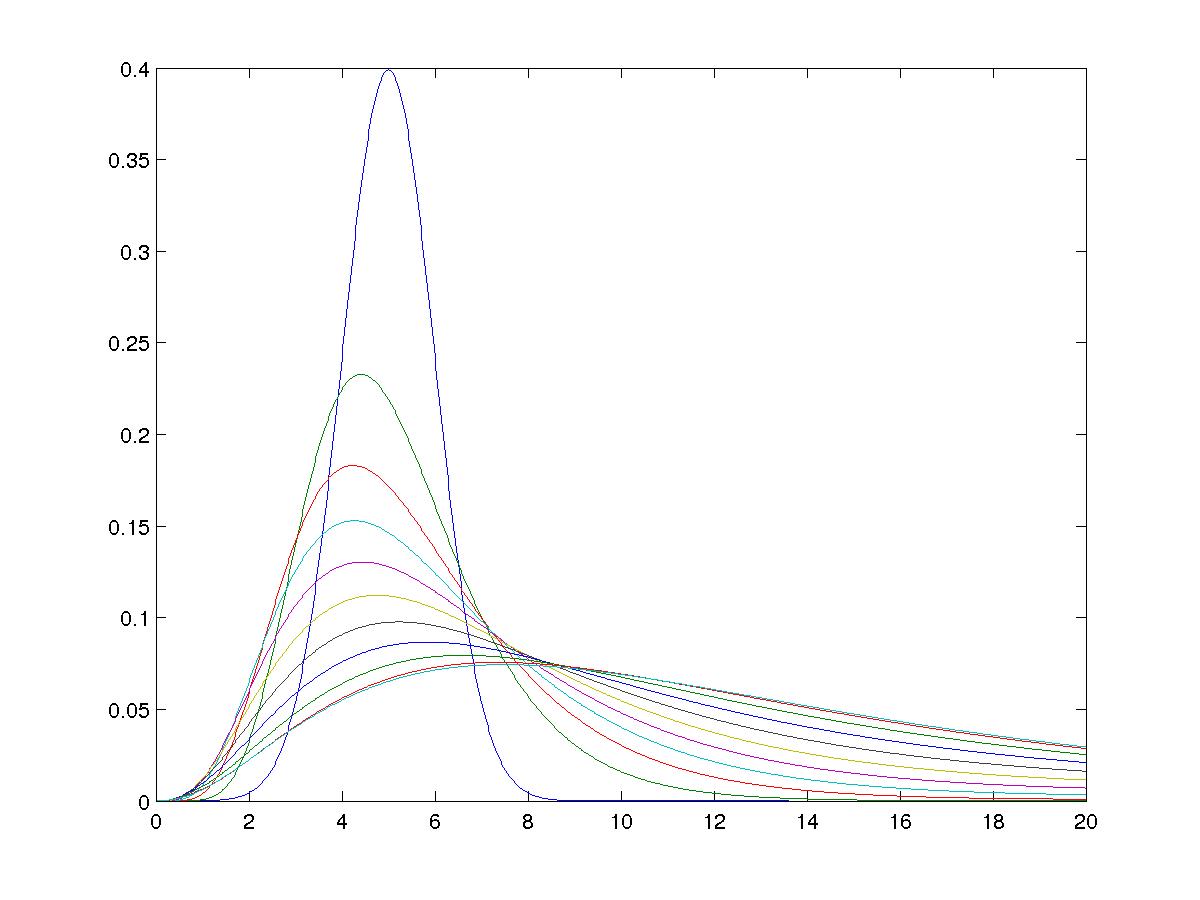}}
\subfigure[Evolution of the production function $Y$.]{\includegraphics[width=0.45\textwidth]{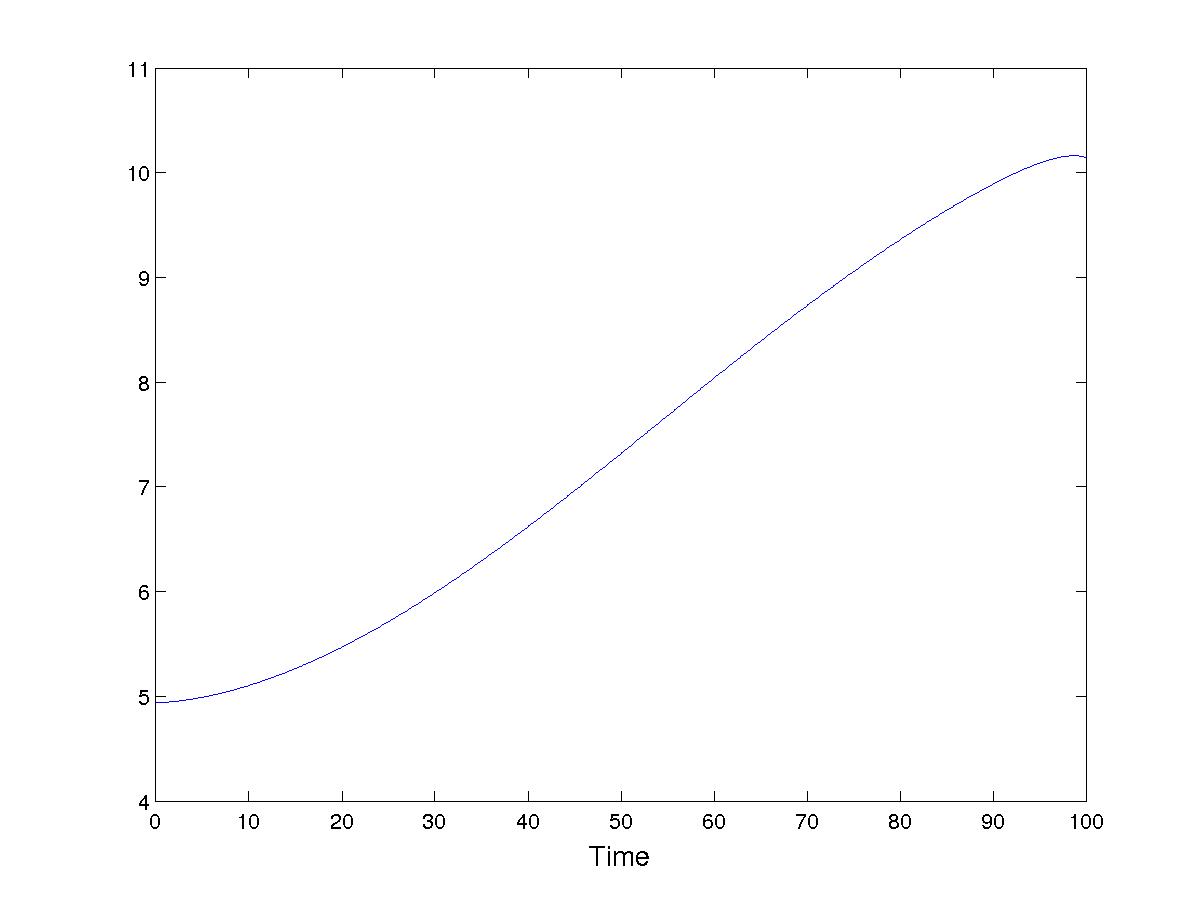}}
\caption{Solution of the time dependent solver converging to a non-trivial BGP}\label{f:bmfg1}
\end{figure}

\noindent Note that diffusion does not automatically initiate substantial growth. We discussed the existence of degenerate BGP solutions, which correspond to
the formation of a Delta Dirac in the agent distribution in Section \ref{s:propbgp} already. These degenerate solutions also cause considerable problems in the
numerical simulations as we shall illustrate in the next 
example. If we increase the diffusivity to  $\nu = 0.125$ the solver converges towards the degenerate solution, see Figure \ref{f:bmfg2}. In this case the 
overall production decreases, hence we do not obtain exponential growth.

\begin{figure}
\subfigure[Evolution of the agent density.]{\includegraphics[width=0.45\textwidth]{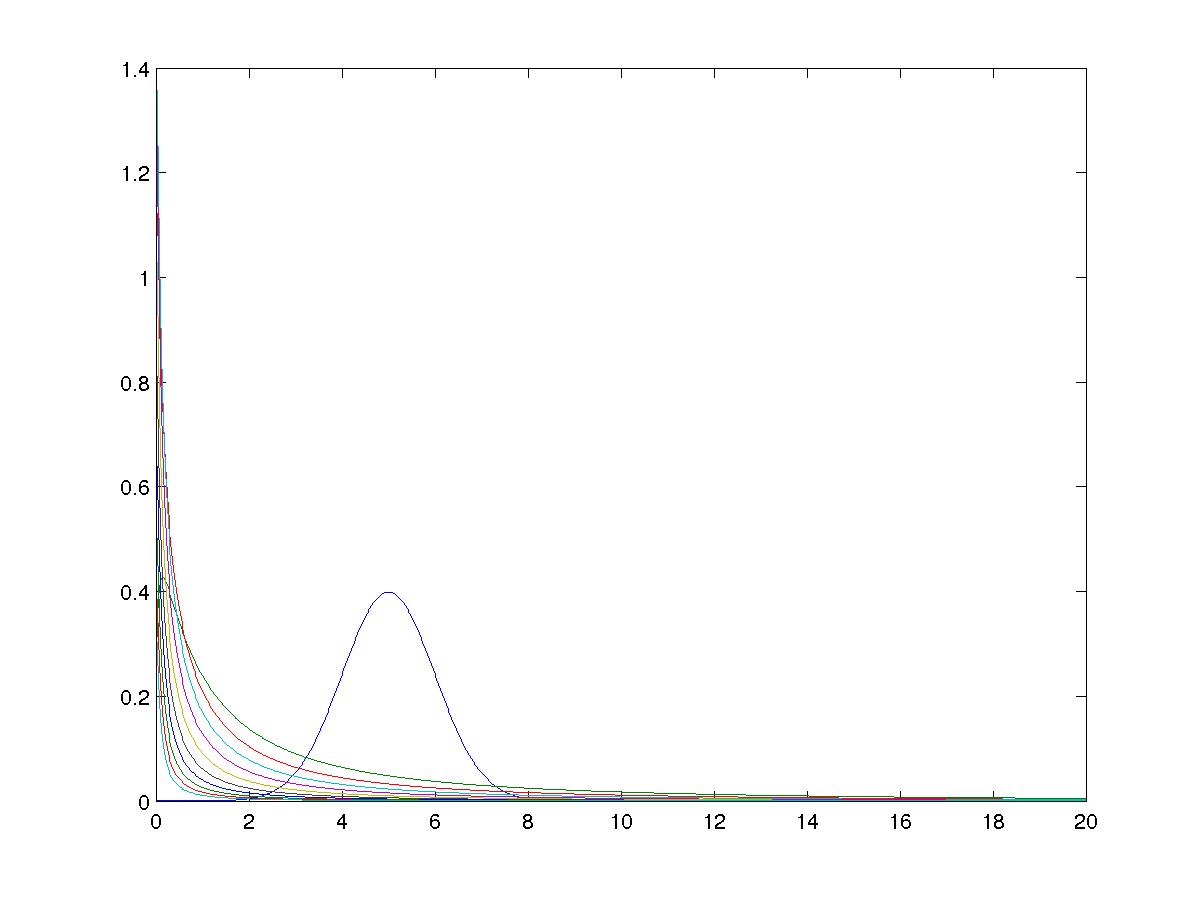}}
\subfigure[Evolution of the production function $Y$.]{\includegraphics[width=0.45\textwidth]{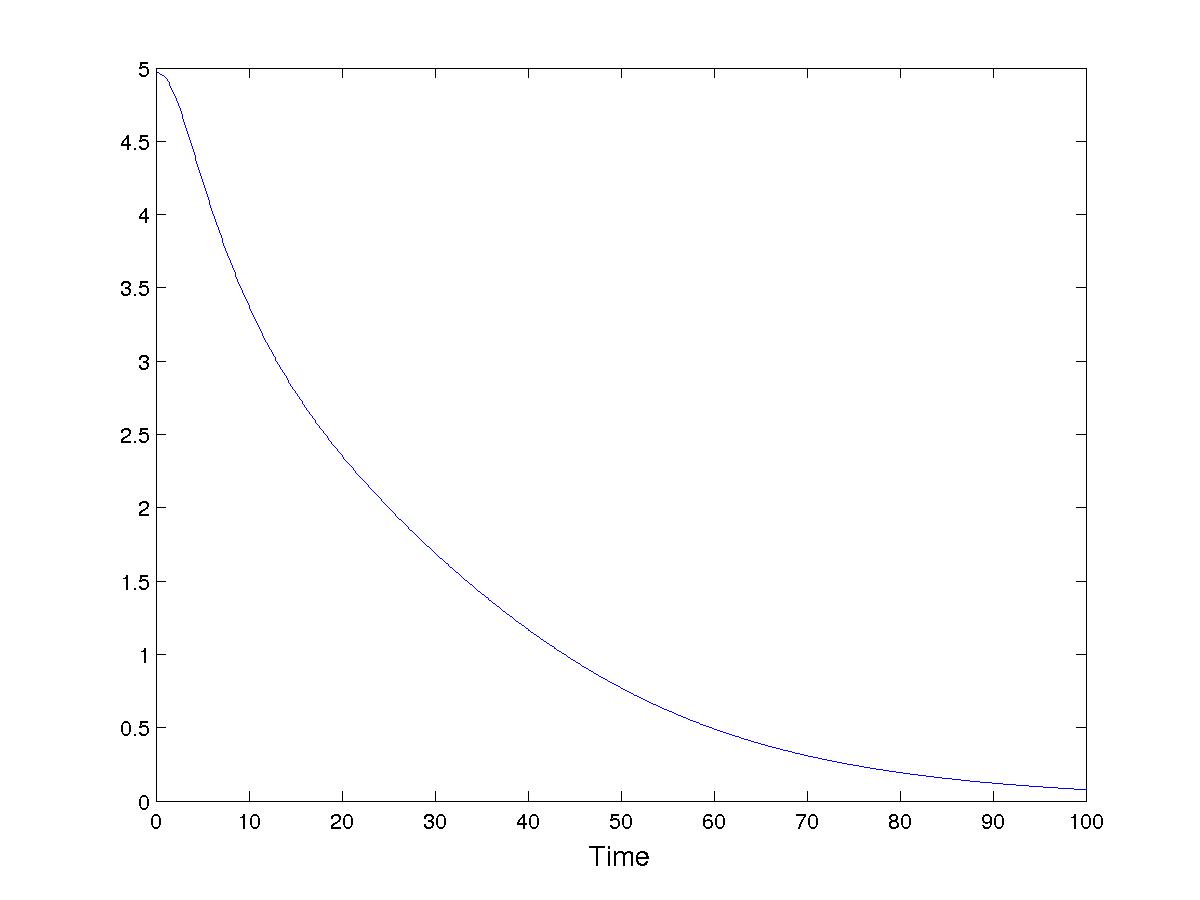}}
\caption{Solution of the time dependent solver converging to the trivial BGP}\label{f:bmfg2}
\end{figure}

\vspace*{1em}
\noindent \textbf{The BGP solver}\\
\noindent The BGP solver is based on an iterative procedure and a finite difference discretization. We use superscripts to denote the iteration number, while the subscripts refer to the spatial position.
System \eqref{e:diffbgp} is supplemented with with
boundary conditions 
\begin{align*}
\partial_x (x^2 \phi)(0) = \partial_x(x^2\phi)(\bar{x}) = 0 \text{ and } v'(0) = v'(\bar{x}) =  0 .
\end{align*}
Note that the no-flux boundary conditions for $\phi$ is automatically satisfied at $x = 0$. To exclude the existence of degenerate BGP solutions we 
set 
\begin{align*}
\phi_0= 0.
\end{align*}
\vspace*{1em}
\noindent The iterative solver is given by
\begin{enumerate}
\item Given $v^n, \sigma^n$ and $\gamma^n$ solve
\begin{align*}
-\gamma^n \phi^{n+1}_i &- \frac{\gamma^n}{h} x_i (\phi^{n+1}_{i+1}-\phi^{n+1}_i) - \frac{2 \nu}{h} (x_{i+1} \phi^{n+1}_{i+1} - x_i \phi^{n+1}_i) \\
&- \frac{\nu}{h^2} (x_{i+1}^2 \phi^{n+1}_{i+1} - (x_{i+1}^2 + x_i^2) \phi_i^{n+1} + x_i^2 \phi^{n+1}_{i-1}) = q_1(\phi^n, \sigma^n)
\end{align*}
subject to the constraint that $(\phi^{n+1}_1 + \phi^{n+1}_2 + \ldots \frac{1}{2}\phi^{n+1}_N) h = 1$ (which corresponds to the discretization of the constraint $\int_0^{\bar{z}} \phi(y) dy = 1$ 
using the trapezoidal rule and $\phi^{n+1}_0 = 0$). Also the integrals in $q_1$ are evaluated using the trapezoidal rule.
\item Given $\phi^{n+1},~\gamma^n$ and $\sigma^n$ solve 
\begin{align*}
(r-\gamma^n) v^{n+1}_i + \frac{\gamma^n}{h} x_i (v_i^{n+1}-v_{i-1}^{n+1}) -\frac{\nu x_i^2}{h^2}( v^{n+1}_{i+1} &- 2 v_i^{n+1} +  v_{i-1}^{n+1}) \\
&= -q_2(\phi_{n+1}, v^{n}, \sigma^{n})
\end{align*}
using the trapezoidal rule to approximate $q_2$.
\item Compute the maximum $\sigma^{n+1}$ and update the growth parameter $\gamma^{n+1}$ via
\begin{align*}
\gamma^{n+1} = 2 \bigl(\nu \int_{\mathcal{I}} \alpha(\sigma^{n+1}(y)) \phi^{n+1}(y) dy\bigr)^{\frac{1}{2}}.
\end{align*}
\item Go to (1) until convergence.
\end{enumerate}
\vspace*{1em}
\noindent \textit{BGP simulations for different diffusivities $\nu$:\\}
\noindent The simulations were performed on the interval $\mathcal{I} = (0,20)$ divided into $1000 $ elements of size $h = 0.02 $. We use over-relaxation to update the parameters $\phi,~v$ and $\sigma$ in each iteration using a damping parameter $\omega = 0.75$. The simulation parameters are set to:
\begin{align*}
\alpha_0 = 0.005 ,~~ n = 0.5 \text{ and } r = 0.1 . 
\end{align*}
The behavior for different values of $\nu$ is illustrated in Figure \ref{f:bgpdiff}. As expected larger diffusivities lead to stronger exponential growth. Furthermore we observe that in the
case of large diffusion the point $x_0(S)$, the point where the fraction of time devoted to learning starts to decrease, is decreasing (see Figure \ref{f:bgpdiff} (b)). \\

\begin{figure}
\subfigure[Agent distribution for different values of $\nu$.]{\includegraphics[width=0.45\textwidth]{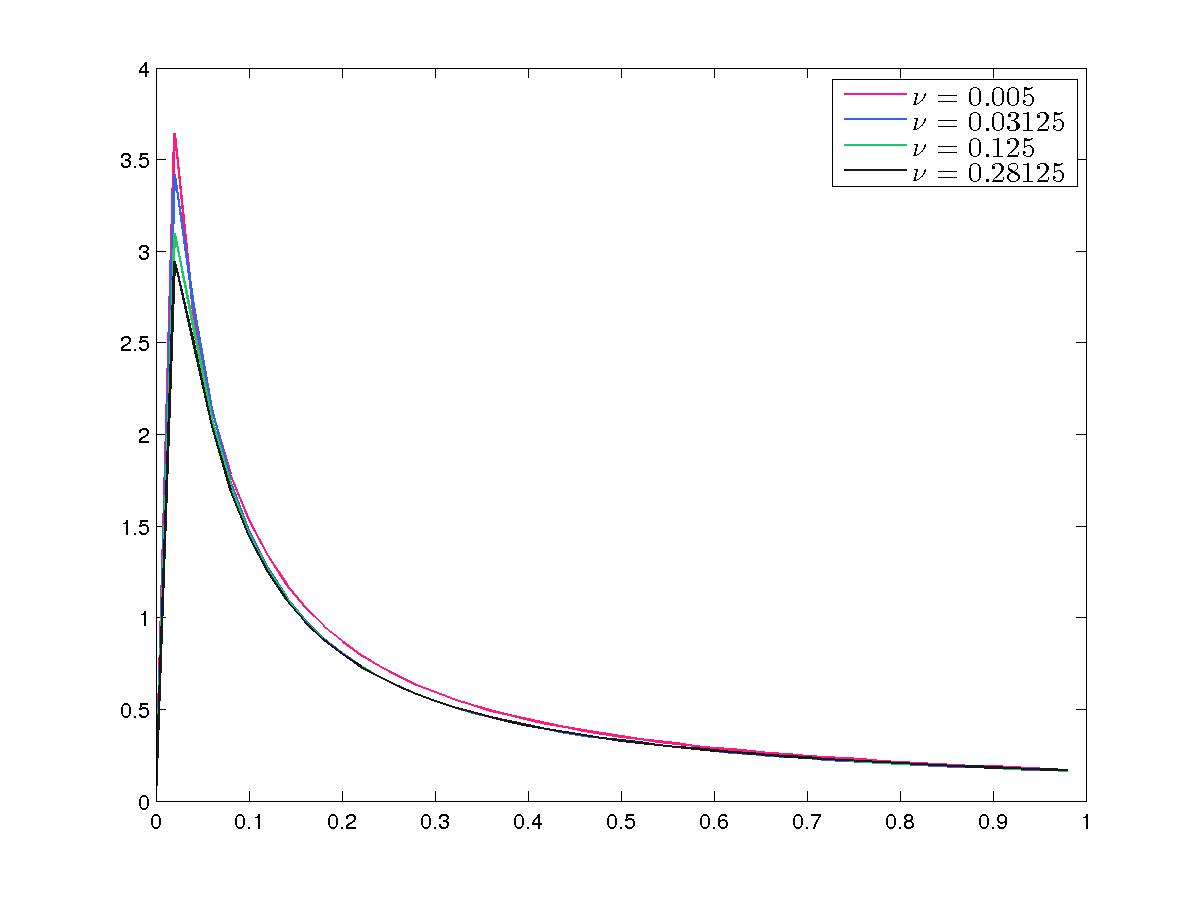}}\hspace*{0.2cm}
\subfigure[Fraction of time $\sigma$ devoted to learning for different values of $\nu$.]{\includegraphics[width=0.45\textwidth]{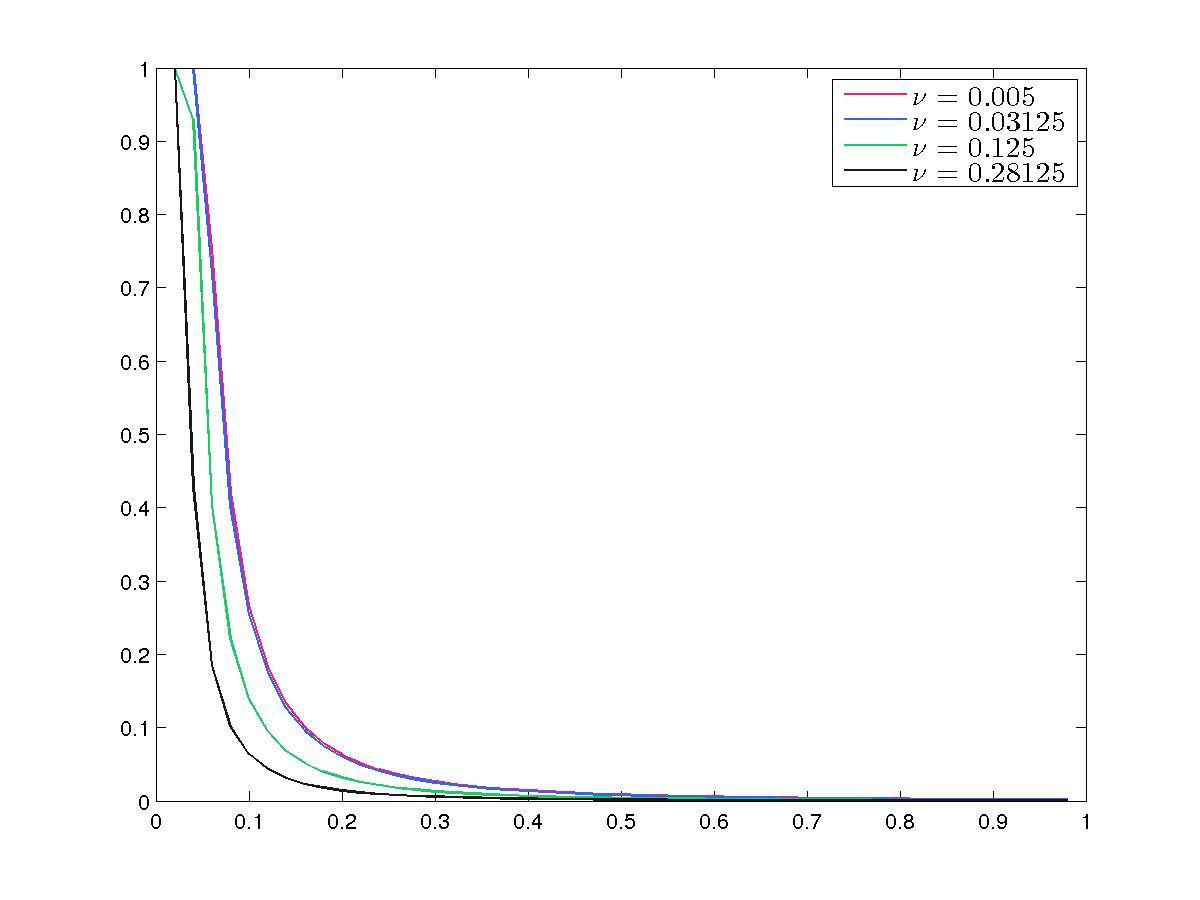}}\\
\subfigure[Production function $Y$.]{\includegraphics[width=0.4\textwidth]{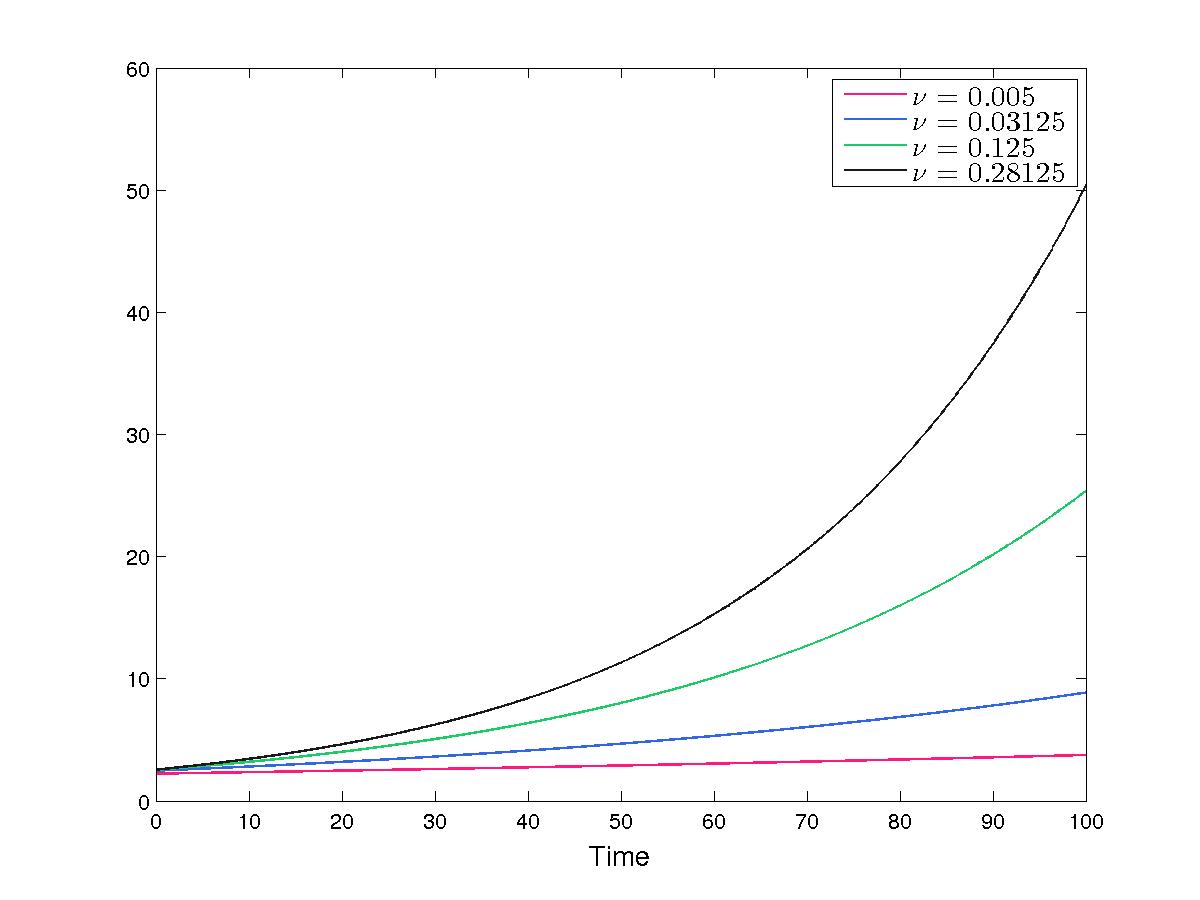}}\\
\caption{Balanced growth path solutions for different diffusivities $\nu$}\label{f:bgpdiff}
\end{figure}

\section{Conclusion}
In this paper we present a full analysis for the existence of BGP solutions of a Boltzmann mean-field game model for knowledge growth. We discuss the necessary assumptions on the
initial datum as well as the existence of degenerate BGP solutions. Furthermore we give first insights into the behavior of the model in the case of geometric
diffusion with various numerical simulations. The simulations confirm the hypothesis of Achdou et. al, that is the existence of balanced growth path solutions in the case
of diffusion but also indicate important questions which shall be addressed in the near future. The existence of degenerate BGP solutions, which already caused significant challenges in this work,
can not be excluded in the diffusive case. Therefore we will focus on the analysis of the diffusive problem as well as the construction of numerical schemes for non-degenerate BGP solutions, based on the time-dependent formulation or the rescaled problem in the near future. Finally modeling generalizations, such as more complicated interaction laws, shall be considered  as well.

\section*{Acknowledgement} 
\noindent MTW acknowledges financial support from the Austrian Academy of Sciences \"OAW via the New Frontiers Group NST-001. 
This research was funded in part by the French ANR blanche project Kibord: ANR-13-BS01-0004.
The authors thank Benjamin Moll for the helpful discussions and comments while preparing the manuscript.
\bibliographystyle{abbrv}
\bibliography{mfg_boltzmann}

\end{document}